\newcommand{\set}[1]{\,\left\{#1\right\}}
\newtheorem{theorem}{Theorem}
\newtheorem{corollary}[theorem]{Corollary}
\newtheorem{lemma}[theorem]{Lemma}
\newtheorem{proposition}[theorem]{Proposition}
\newtheorem{definition}[theorem]{Definition}
\newtheorem{question}[theorem]{Question}
\newtheorem{remark}[theorem]{Remark}
\newcommand{\HH}{\mathcal{H}}
\newcommand{\Addresses}{{
  \bigskip
  \footnotesize

K. Li, P. W. Nowak and S. Pooya, \textsc{Institute of Mathematics of the Polish Academy of Sciences, \'{S}niadeckich 8, 00-656 Warszawa, Poland}\\
{\texttt{kli@impan.pl}}, {\texttt{pnowak@impan.pl}}, {\texttt{spooya@impan.pl}}
\par\nopagebreak
}}
\begin{document}

\title{Higher Kazhdan projections, $\ell_2$-Betti numbers and Baum-Connes conjectures}
\author{Kang Li}
\author{Piotr W. Nowak}
\author{ Sanaz Pooya}

\affil{ }

\maketitle

\begin{abstract}
We introduce higher-dimensional analogs of Kazhdan projections in matrix algebras over group $C^*$-algebras 
and Roe algebras. These projections are constructed  in the framework of cohomology with coefficients in unitary representations and
in certain cases give rise to non-trivial $K$-theory classes.
We apply the higher Kazhdan projections to establish a relation between 
$\ell_2$-Betti numbers of a group and surjectivity of different Baum-Connes type assembly maps. 
\end{abstract}

Kazhdan projections are certain idempotents whose existence in the maximal group $C^*$-algebra $C^*_{\max}(G)$ of a group $G$ characterizes Kazhdan's property $(T)$
for $G$. They have found an important application in higher index theory, as the non-zero $K$-theory class represented by a Kazhdan projection in $K_0(C^*_{\max}(G))$ obstructs the Dirac-dual Dirac method of proving the Baum-Connes conjecture. 
This idea has been used effectively in the coarse setting \cite{higson, higson-lafforgue-skandalis}, where Kazhdan projections were used to construct 
counterexamples to the coarse Baum-Connes conjecture and to the Baum-Connes conjecture with coefficients. 
However, Kazhdan projections related to property $(T)$ are not applicable to the classical Baum-Connes conjecture, as the image of a Kazhdan projection 
in the reduced group $C^*$-algebra vanishes for any infinite group.

The goal of this work is to introduce 
higher-dimensional analogs of Kazhdan projections with the motivation of applying them 
to the $K$-theory of group $C^*$-algebra and to higher index theory.
These higher Kazhdan projections are defined in the context of higher cohomology with coefficients in unitary representations 
and they are elements of matrix algebras over group $C^*$-algebras. Their defining feature is that their 
image in certain unitary representations is the orthogonal projection onto the harmonic $n$-cochains. 
We  show that the higher Kazhdan projections exist in many natural cases and that they are non-zero. 

We then investigate the classes represented by higher Kazhdan projection  in the $K$-theory of  group $C^*$-algebras. 
This is done by establishing a connection between the properties of the introduced projections and $\ell_2$-Betti numbers of the 
group. In particular, we can conclude that the higher Kazhdan projections over the reduced group $C^*$-algebra can be non-zero  
and can give rise to non-zero $K$-theory 
classes in $K_0(C^*_r(G))$.

To present the connections between the existence of higher Kazhdan projections and $\ell_2$-invariants recall that given 
a finitely generated group $G$  the subring $\Lambda^G\subseteq \mathbb{Q}$ is generated 
from $\mathbb{Z}$ by adjoining inverses of all orders of finite subgroups, as in \cite[Theorem 0.3]{luck}.
The cohomological Laplacian in degree $n$ can be represented by a matrix with coefficients in the group ring $\mathbb{R}G$, denoted $\Delta_n$.
In the case of the regular representation $\lambda$ and the reduced group $C^*$-algebra $C^*_r(G)$ 
the operator $\Delta_n\in \mathbb{M}_k(C^*_r(G))$ is the cohomological Laplacian in degree $n$ in  the $\ell_2$-cohomology of $G$. 
We will say that $\Delta_n$ has a spectral gap in a chosen completion of $\mathbb{M}_n(\mathbb{C}G)$ if its spectrum is contained in $\{0\}\cup [\epsilon,\infty)$ for some $\epsilon>0$.
Recall that a group $G$ is of type $F_n$ if it has an Eilenberg-MacLane space 
with a finite $n$-skeleton. Denote by $k_n$ the number of simplices in the chosen model of the Eilenberg-MacLane space.

Let $\beta_{(2)}^n(G)$ denote the $n$-th $\ell_2$-Betti number of $G$.
The following is a consequence of the existence of spectral gap for the Laplacian and \cite{luck}.
\begin{proposition}\label{proposition: Baum-Connes main}
Let $G$ be of type $F_{n+1}$. Assume $\Delta_n \in\mathbb{M}_{k_n}(C^*_r(G))$ has a spectral gap. 
If the Baum-Connes assembly map $K_0^G(\underline{E}G)\to K_0(C^*_r(G))$  
is surjective 
then $$\beta_{(2)}^n(G)\in \Lambda^G.$$  
In particular, if $G$ is torsion-free then $\beta_{(2)}^n(G)\in \mathbb{Z}.$
\end{proposition}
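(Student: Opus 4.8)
The plan is to connect a higher Kazhdan projection to the $\ell_2$-Betti number via Lück's theorem, which relates $K$-theory of the reduced group $C^*$-algebra to the subring $\Lambda^G$. The key insight should be that the spectral gap for $\Delta_n$ allows us to define, via continuous functional calculus, a projection $p_n \in \mathbb{M}_{k_n}(C^*_r(G))$ onto the kernel of $\Delta_n$. Indeed, if $\operatorname{spec}(\Delta_n) \subseteq \{0\} \cup [\epsilon, \infty)$, then the characteristic function $\chi_{\{0\}}$ is continuous on the spectrum, so $p_n = \chi_{\{0\}}(\Delta_n)$ is a genuine element of the $C^*$-algebra (this is precisely the reduced higher Kazhdan projection). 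Under the regular representation, this projection maps to the orthogonal projection onto the harmonic $\ell_2$-cochains, so its von Neumann trace equals the $n$-th $\ell_2$-Betti number:
\begin{equation}
\tau(p_n) = \dim_{\mathcal{L}(G)} \ker \Delta_n = \beta_{(2)}^n(G),
\end{equation}
where $\tau$ is the canonical trace on $C^*_r(G)$ (extended to matrices).

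First I would establish that $p_n$ lies in the image of the assembly map argument. The trace $\tau$ induces a homomorphism $\tau_* \colon K_0(C^*_r(G)) \to \mathbb{R}$ on $K$-theory. The class $[p_n] \in K_0(C^*_r(G))$ satisfies $\tau_*[p_n] = \beta_{(2)}^n(G)$ by the computation above. Next, the crucial step is to invoke Lück's result \cite[Theorem 0.3]{luck}, which states that if the Baum-Connes assembly map $K_0^G(\underline{E}G) \to K_0(C^*_r(G))$ is surjective, then the image of the composition $\tau_* \circ (\text{assembly})$ lands in $\Lambda^G$; equivalently, the trace of any $K$-theory class in the image of assembly lies in $\Lambda^G$. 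Since surjectivity of assembly means every class, in particular $[p_n]$, is in the image, we conclude $\beta_{(2)}^n(G) = \tau_*[p_n] \in \Lambda^G$.

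The main obstacle I anticipate is verifying carefully that the spectral-gap projection $p_n$ is exactly the element whose trace computes the $\ell_2$-Betti number, and that this trace is compatible with Lück's framework. One must check that the matrix-level trace on $\mathbb{M}_{k_n}(C^*_r(G))$ agrees with the von Neumann dimension of the space of harmonic cochains, which uses the identification of the reduced completion with the group von Neumann algebra $\mathcal{L}(G)$ acting on $\ell_2$-cochains, together with the fact that $\beta_{(2)}^n(G)$ is by definition $\dim_{\mathcal{L}(G)} H^n_{(2)}(G) = \dim_{\mathcal{L}(G)} \ker \Delta_n$ (the latter equality being standard Hodge theory for the combinatorial Laplacian, valid because the type $F_{n+1}$ hypothesis and the spectral gap guarantee the relevant cochain spaces are finitely generated Hilbert modules). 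The torsion-free case follows immediately, since then every finite subgroup is trivial, all orders equal $1$, and $\Lambda^G = \mathbb{Z}$.
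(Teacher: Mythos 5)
Your proposal is correct and follows essentially the same route as the paper: use the spectral gap to obtain the higher Kazhdan projection $p_n\in\mathbb{M}_{k_n}(C^*_r(G))$ by functional calculus, identify $\tau_*[p_n]=\beta_{(2)}^n(G)$ via the inclusion $\mathbb{M}_{k_n}(C^*_r(G))\subseteq\mathbb{M}_{k_n}(L(G))$ and the von Neumann dimension of the kernel of the Laplacian, and then apply L\"{u}ck's Theorem 0.3 together with surjectivity of the assembly map to place this trace in $\Lambda^G$ (with $\Lambda^G=\mathbb{Z}$ in the torsion-free case). The only cosmetic difference is that you take $\chi_{\{0\}}(\Delta_n)$ directly while the paper realizes the same spectral projection as $\lim_{t\to\infty}e^{-t\Delta_n}$, which is immaterial.
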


The above theorem illustrates a possible strategy 
for finding counterexamples to the Baum-Connes conjecture: a group $G$ of type $F_{n+1}$ such that 
$\beta_{(2)}^n(G)\notin \Lambda_G$ (in particular, if $\beta_{(2)}^n(G)$ is irrational) with 0 isolated in the spectrum of $\Delta_n$ will not satisfy the 
Baum-Connes conjecture; more precisely, the Baum-Connes assembly map for $G$ will not be surjective.  
We refer to \cite{gomez-aparicio-etal,valette}
for an overview of the Baum-Connes conjecture. It is worth noting that such a counterexample, with bounded orders of finite subgroups, would also not satisfy the Atiyah conjecture, see e.g. \cite{luck-book}.

Our primary application is the construction of classes in the $K$-theory of the 
Roe algebra of a box space of a residually finite group $G$ and their relation to the L\"{u}ck approximation theorem.
Let $\{ N_i\}$ be a family of finite index normal subgroups of a residually finite group $G$, satisfying $\bigcap N_i=\{e\}$. 
Let $\lambda_i$ denote the associated quasi-regular representation of $G$ on $\ell_2(G/N_i)$ and consider the family $\mathcal{N}=\{\lambda, \lambda_1, \lambda_2,\dots\}$ of unitary representations of $G$.
The $C^*$-algebra $C^*_{\mathcal{N}}(G)$ is the completion of the group ring $\mathbb{C}G$ 
with respect to the norm induced by the family $\mathcal{N}$ (see 
Section \ref{subsection: Kazhdan projection related to (T)} for a precise definition). We will denote 
by  $\beta^n(G)=\dim_{\mathbb{C}} H^n(G,\mathbb{C})$ the standard $n$-th Betti number of $G$.

\begin{theorem}\label{theorem: cbcc and l2betti numbers}
Let $G$ be an exact, residually finite group of type $F_{n+1}$ and let $\{ N_i\}$ and $\mathcal{N}$ be as above.
Assume that  $\Delta_n\in  \mathbb{M}_{k_n}(C^*_{\mathcal{N}}(G))$ has a spectral gap
and that the coarse Baum-Connes assembly map 
 $KX_0(Y)\to K_0(C^*(Y))$
for the box space 
$Y=\coprod G/N_i$ of $G$
is surjective. Then
$$ \beta_{(2)}^n(N_i)= \beta^n(N_i)$$ 
for all but finitely many $i$.
\end{theorem}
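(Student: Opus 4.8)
The plan is to realize the spectral gap as a higher Kazhdan projection and then read off two traces of this one projection: the ordinary Betti numbers in the finite quotients and the $\ell_2$-Betti number in the regular representation. Since $\Delta_n\in\mathbb{M}_{k_n}(C^*_{\mathcal{N}}(G))$ has spectrum in $\{0\}\cup[\epsilon,\infty)$, the indicator of $\{0\}$ is continuous on the spectrum and continuous functional calculus yields a projection $p_n\in\mathbb{M}_{k_n}(C^*_{\mathcal{N}}(G))$ whose image under each $\pi\in\mathcal{N}$ is the orthogonal projection onto the harmonic $n$-cochains $\mathcal{H}^n(G,\pi)$. For the quasi-regular representation $\lambda_i$ on $\ell_2(G/N_i)$ I would identify $\mathcal{H}^n(G,\ell_2(G/N_i))$ with $H^n(G,\mathbb{C}[G/N_i])\cong H^n(N_i,\mathbb{C})$ via Shapiro's lemma (the coefficient module being finite-dimensional, harmonic cochains compute ordinary cohomology), so that $\operatorname{Tr}(\lambda_i(p_n))=\beta^n(N_i)$; for the regular representation $\lambda$ the von Neumann dimension of $\lambda(p_n)$ is $\beta^n_{(2)}(G)$. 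Normalizing, $\beta^n(N_i)/[G:N_i]\to\beta^n_{(2)}(G)$, which is exactly L\"uck approximation and pins down the limiting value.

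Next I would move $p_n$ into the Roe algebra of the box space. Using exactness of $G$ and the identification of $C^*(Y)$ for $Y=\coprod G/N_i$ with a completion of $\mathbb{C}G$ acting through the quasi-regular representations $\{\lambda_i\}$ (each $\lambda_i(g)$ has propagation at most the word length of $g$, uniformly in $i$), there is a $*$-homomorphism $\mathbb{M}_{k_n}(C^*_{\mathcal{N}}(G))\to\mathbb{M}_{k_n}(C^*(Y))$ sending $p_n$ to a projection, hence a class $[p_n]\in K_0(C^*(Y))$ (since $K_0(\mathbb{M}_{k_n}(C^*(Y)))=K_0(C^*(Y))$). Concretely $p_n$ becomes the block-diagonal field $(\lambda_i(p_n))_i$ of projections of ranks $\beta^n(N_i)$, sitting in the box-space extension
$$0\longrightarrow\bigoplus_i\mathbb{K}(\ell_2(G/N_i))\longrightarrow C^*(Y)\longrightarrow Q\longrightarrow 0.$$
The two computations above are precisely the data recorded by this extension: the ideal sees the integer block ranks $\beta^n(N_i)$, while the behaviour of $p_n$ at infinity is governed by the regular representation, the Fell limit of the $\lambda_i$, returning the normalized trace $\beta^n_{(2)}(G)$.

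I would then feed in surjectivity of the coarse assembly map, in analogy with Proposition~\ref{proposition: Baum-Connes main}. There, surjectivity of the ordinary assembly map together with L\"uck's computation of the tracial image of $K_0^G(\underline{E}G)$ forces $\beta^n_{(2)}(G)\in\Lambda^G$; here the coarse assembly map $KX_0(Y)\to K_0(C^*(Y))$ is assembled from the $K$-homology of the finite spaces $G/N_i$, whose classes are detected by integer ranks. The point is that any discrepancy between $\beta^n(N_i)$ and its limiting value $[G:N_i]\,\beta^n_{(2)}(G)$ manifests as a ghost-type contribution to $[p_n]$: a projection whose normalized entries vanish at infinity yet whose rank is non-trivial, exactly the kind of class that obstructs the coarse assembly map (as in the property $(T)$ counterexamples, where $\beta^0(N_i)=1\neq 0=[G:N_i]\,\beta^0_{(2)}(G)$ produces a non-zero ghost projection). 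Surjectivity rules out such a surviving obstruction, forcing the defect to vanish for all large $i$; combined with multiplicativity of $\ell_2$-Betti numbers, $\beta^n_{(2)}(N_i)=[G:N_i]\,\beta^n_{(2)}(G)$, this gives $\beta^n(N_i)=[G:N_i]\,\beta^n_{(2)}(G)=\beta^n_{(2)}(N_i)$ for all but finitely many $i$.

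The main obstacle is this last implication: turning ``surjectivity of the coarse assembly map'' into the sharp statement that the defect sequence $\beta^n(N_i)-[G:N_i]\,\beta^n_{(2)}(G)$ vanishes \emph{eventually}, rather than merely asymptotically as L\"uck approximation already guarantees. This demands a precise $K$-theoretic identification of $C^*(Y)$ with the $\mathcal{N}$-completion of $\mathbb{C}G$, control of the trace induced on the quotient $Q$ so that it genuinely computes $\beta^n_{(2)}(G)$, and a rigidity statement — the coarse analogue of L\"uck's tracial computation — showing that a geometric class cannot carry a non-vanishing asymptotic ghost defect. The preceding steps, namely the construction of $p_n$ and the two trace computations, are routine given the spectral gap and Shapiro's lemma.
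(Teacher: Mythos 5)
Your overall strategy is the same as the paper's: build the spectral projection $p_n$ from the gap, realize it in the Roe algebra as the block-diagonal projection $P_n=\oplus_i\lambda_i(p_n)$, compute a ``local'' trace giving $\beta^n(N_i)$ (via Shapiro's lemma, exactly as in the paper) and a ``lifted'' trace giving $[G:N_i]\,\beta_{(2)}^n(G)=\beta_{(2)}^n(N_i)$, and then use surjectivity of the coarse assembly map to force the two sequences to agree for all but finitely many $i$. Those two trace computations are correct and coincide with the paper's. But the step you yourself flag as the ``main obstacle'' is a genuine gap, and it is precisely the step that \emph{is} the proof: you need a concrete mechanism converting membership of $[P_n\otimes q]$ in the image of $\mu_c:KX_0(Y)\to K_0(C^*(Y))$ into coordinatewise equality of the two sequences. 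Your candidate mechanism --- the extension $0\to\oplus_i\mathcal{K}(\ell_2(G/N_i))\to C^*(Y)\to Q\to 0$ together with a heuristic about ``ghost-type contributions'' --- does not deliver it: non-compact ghost projections (such as the property $(T)$ Kazhdan projection of an expander) are \emph{not} in $\oplus_i\mathcal{K}$ and survive in $Q$, so the quotient by the compacts neither detects ghosts nor computes a regular-representation trace, and ``ghost defect'' is not a $K$-theoretic invariant you have defined or shown to obstruct assembly. A symptom of the missing piece is where you invoke exactness: mapping $C^*_{\mathcal{N}}(G)$ \emph{into} the (uniform) Roe algebra requires no hypothesis, since $\sup_i\Vert\lambda_i(f)\Vert\le\Vert f\Vert_{\mathcal{N}}$; exactness is needed for the opposite direction, which your argument never constructs. (Also, $C^*(Y)$ is not a completion of $\mathbb{C}G$; it contains, e.g., all of $\oplus_i\mathcal{K}$. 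Only a quotient of $C^*_{\mathcal{N}}(G)$ maps into $C^*_u(Y)$.)

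What closes the gap in the paper is the Willett--Yu lifting machinery. First, exactness of $G$ (equivalently, Sako's operator norm localization property) is used to extend the lift $\phi:\mathbb{C}[Y]\to\prod_i\mathbb{C}[G]^{N_i}\big/\oplus_i\mathbb{C}[G]^{N_i}$, and its uniform version $\phi_u$, to $*$-homomorphisms on the full algebras $C^*(Y)$ and $C^*_u(Y)$. Second, one checks that $\phi_u^{(k)}(P_n)$ is the \emph{constant} sequence $\prod_i\lambda(p_n)$: the lift of the block-diagonal Laplacian $D_n=\oplus_i\lambda_i(\Delta_n)$ is the constant sequence $\prod_i\Delta_n$ (the supports and the lifting formula are eventually independent of $i$), and spectral projections, obtained as $\lim_{t\to\infty}e^{-tD_n}$, commute with unital $*$-homomorphisms. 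Third, the traces $\tau_i$ on $C^*(G)^{N_i}\cong C^*_r(N_i)\otimes\mathcal{K}$ assemble into a map $T$ on $K_0$ of the lift target, and the decisive input is Lemma 6.5 of Willett--Yu: for any projection $p$ over $C^*(Y)$ whose class lies in the image of the coarse assembly map, $d_*([p])=T(\phi_*([p]))$ in $\prod\mathbb{R}/\oplus\mathbb{R}$, i.e.\ coordinatewise for all but finitely many $i$. This lemma is exactly the ``rigidity statement, the coarse analogue of L\"{u}ck's tracial computation'' that you say is needed but do not supply; without it (or an equivalent substitute) your argument does not close. Given it, the theorem follows at once from your two trace computations --- which is precisely how the paper concludes.
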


Note that the conclusion of the above theorem in fact is a strenghtening of L\"{u}ck's approximation theorem \cite{luck-gafa}.
Indeed,  the expression in the formula in the above theorem can be rewritten as 
$$[G:N_i]\left(  \beta_{(2)}^n(G) - \dfrac{\beta^n(N_i)}{[G:N_i]}  \right) = 0,$$
Theorem \ref{theorem: cbcc and l2betti numbers} thus
forces a much stronger equality of the involved Betti numbers. 
On the other hand, as shown in \cite[Theorem 5.1]{luck-approx-survey} there are examples where the  speed of convergence 
of $\beta^n(N_i)/[G:N_i]$ to $\beta_{(2)}^n(G)$ can be as slow as needed. 

Theorem \ref{theorem: cbcc and l2betti numbers} in particular provides new strategies for contradicting the coarse Baum-Connes conjecture. 
It is an important open question wether there exist such counterexamples that do not contain expander graphs. 

 However, it is also natural to conjecture that Theorem \ref{theorem: cbcc and l2betti numbers} could also lead to new counterexamples 
to the Baum-Connes conjecture with coefficients
by embedding a counterexample to the coarse Baum-Connes conjecture, obtained through Theorem \ref{theorem: cbcc and l2betti numbers}, 
isometrically into a finitely generated group, as in \cite{osajda}.
Such constructions could lead to new counterexample to the Baum-Connes conjecture with coefficients, by applying arguments similar to the 
ones in \cite{higson-lafforgue-skandalis}.

We remark that algebraic conditions implying the existence of gaps in the spectrum of the operators arising from the 
cochain complexes with coefficients in unitary representations
were recently provided in
\cite{bader-nowak}. Those conditions involve writing the elements $(\Delta_n^+-\lambda\Delta_n^+)\Delta_n^+$ 
and $(\Delta_n^--\lambda\Delta_n^-)\Delta_n^-$, where $\Delta_n^+=d_n^*d_n$ and $\Delta_n^-=d_{n-1}d_{n-1}^*$ are
 matrices over $\mathbb{R}G$ and the two summands of the Laplacian, as sums of squares in $\mathbb{M}_n(\mathbb{R}G)$. 
 Such conditions can be verified using computational methods.

\subsection*{Acknowledgments}
We are grateful to Dawid Kielak, Roman Sauer, Thomas Schick and Alain Valette for helpful comments. 

This project has received funding from the European Research Council (ERC) under the European Union's Horizon 2020 research and innovation programme (grant agreement no. 677120-INDEX).
\tableofcontents

\section{Higher Kazhdan projections}
 Recall that a group has type $F_n$
if it admits an
Eilenberg-MacLane space $K(G,1)$ with a finite $n$-skeleton. The condition $F_1$ is equivalent to  $G$ being finitely generated, and $F_2$ is equivalent 
to 
$G$ having a finite presentation.

\subsection{Kazhdan projections related to property $(T)$} \label{subsection: Kazhdan projection related to (T)}
We begin with revisiting the classical notion of Kazhdan projections in the context of property $(T)$.
Let $G$ be a finitely generated group with $S=S^{-1}$ a fixed generating set. The real (respectively, complex) group ring of $G$ will be denoted 
by $\mathbb{R}G$ (respectively, $\mathbb{C}G$).
Consider a family $\mathcal{F}$ of unitary representations and let the associated group $C^*$-algebra be the completion
$$C^*_{\mathcal{F}}(G)=\overline{\mathbb{C}G}^{\Vert\cdot\Vert_{\mathcal{F}}},$$
where
$$\Vert f\Vert_{\mathcal{F}}=\sup_{\pi\in\mathcal{F}} \Vert \pi(f)\Vert.$$
We will always assume that $\mathcal{F}$ is a faithful family (i.e. for every $0\neq f\in \mathbb{C}G$ we have $\pi(f)\neq 0$ for some $\pi\in\mathcal{F}$) to ensure $\Vert \cdot \Vert_{\mathcal{F}}$ is a norm.
In particular, we obtain  the maximal group $C^*$-algebra $C^*_{\max}(G)$ if $\mathcal{F}$ is the family of all unitary representations,
and the reduced group $C^*$-algebra $C^*_r(G)$ if $\mathcal{F}=\{ \lambda\}$, where $\lambda$ is the left regular representation.

The following is a classical characterization of property $(T)$ for $G$ due to Akemann and Walter \cite{akemann-walter}. We will use this characterization
as a definition and refer to \cite{bekka-etal} for a comprehensive overview of property $(T)$.
\begin{theorem}[Akemann-Walter \cite{akemann-walter}]
The group $G$ has property $(T)$ if and only if there exists a projection  $p_0=p_0^*=p_0^2\in C^*_{\max}(G)$ 
with the property that for every unitary representation  $\pi$ of $G$
the image $\pi(p_0)$ is the orthogonal projection onto the subspace of invariant vectors of $\pi$.
\end{theorem}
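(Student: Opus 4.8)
The plan is to realize $p_0$ as a spectral projection of the group Laplacian and to treat the two implications separately, taking as the definition of property $(T)$ the existence of a Kazhdan pair (equivalently, that every representation with almost invariant vectors has a nonzero invariant vector). Fix the generating set $S=S^{-1}$ and set
$$\Delta=\sum_{s\in S}(2-s-s^{-1})\in\RR G\subseteq C^*_{\max}(G).$$
For every unitary representation $\pi$ one has $\pi(\Delta)=\sum_{s\in S}(\Id-\pi(s))^*(\Id-\pi(s))\ge 0$, and since $S$ generates $G$ its kernel is exactly the space of $\pi$-invariant vectors. Thus the orthogonal projection onto the invariant vectors of $\pi$ is, for each $\pi$, precisely the spectral projection of $\pi(\Delta)$ associated to the eigenvalue $0$, which is what reduces the statement to controlling the spectrum of the single element $\Delta$.

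For the forward implication I would show that property $(T)$ forces $0$ to be isolated in the spectrum of $\Delta$ computed in $C^*_{\max}(G)$. If $(S,\kappa)$ is a Kazhdan pair, then any unit vector $\xi$ in a representation without invariant vectors satisfies $\langle\pi(\Delta)\xi,\xi\rangle=\sum_{s\in S}\|\pi(s)\xi-\xi\|^2\ge\kappa^2$; decomposing an arbitrary $\pi$ into its invariant vectors and their orthogonal complement (the latter a subrepresentation with no nonzero invariant vectors) then yields
$$\operatorname{Spec}\pi(\Delta)\subseteq\{0\}\cup[\kappa^2,\infty)$$
uniformly in $\pi$. Since the universal representation is isometric on $C^*_{\max}(G)$, the spectrum of $\Delta$ there is the closure of $\bigcup_\pi\operatorname{Spec}\pi(\Delta)$, hence also lies in $\{0\}\cup[\kappa^2,\infty)$ with $0$ isolated. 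I would then choose a continuous $\chi$ with $\chi\equiv1$ near $0$ and $\chi\equiv0$ on $[\kappa^2,\infty)$ and set $p_0=\chi(\Delta)$ via continuous functional calculus in $C^*_{\max}(G)$. As $\chi$ is real-valued with $\chi^2=\chi$ on the spectrum, $p_0=p_0^*=p_0^2$; and because any $\pi$ intertwines the functional calculus, $\pi(p_0)=\chi(\pi(\Delta))$ is the spectral projection onto $\ker\pi(\Delta)$, i.e. onto the invariant vectors.

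For the converse, assume such a $p_0$ exists and suppose for contradiction that $G$ fails $(T)$; then there is a representation $\rho$ with almost invariant but no nonzero invariant vectors, so $\rho(p_0)=0$. First I would approximate $p_0$ by $f=\sum_g c_g\, g\in\CC G$ with $\|p_0-f\|_{\mathcal F}<\tfrac12$, where $\mathcal F$ is the family of all unitary representations. Evaluating at the trivial representation $\tau$, for which $\tau(p_0)=1$, gives $|1-\sum_g c_g|<\tfrac12$, hence $|\sum_g c_g|>\tfrac12$. Choosing a unit vector $\xi$ in $\rho$ that is sufficiently invariant on $\supp f$ makes $\rho(f)\xi$ arbitrarily close to $(\sum_g c_g)\xi$, so $\|\rho(f)\xi\|>\tfrac12$; but $\|\rho(f)\xi\|=\|\rho(f-p_0)\xi\|\le\|f-p_0\|_{\mathcal F}<\tfrac12$, a contradiction.

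The hard part will be the forward direction, though the difficulty is not the functional calculus itself but the two steps that precede it: converting the representation-theoretic Kazhdan inequality into a single \emph{uniform} spectral gap valid across all $\pi$ simultaneously, and identifying the spectrum of $\Delta$ in $C^*_{\max}(G)$ with the closure of the union of the representationwise spectra. It is exactly this identification that makes $0$ an isolated point of the spectrum and thereby legitimizes defining $p_0$ as an element of the $C^*$-algebra rather than merely representation by representation.
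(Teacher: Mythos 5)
Your proof is correct, but there is nothing in the paper to compare it against: the paper does not prove this statement at all. It quotes the equivalence as a classical theorem of Akemann and Walter \cite{akemann-walter} and explicitly announces that it will use the characterization \emph{as the definition} of property $(T)$, referring to \cite{bekka-etal} for background. What can be compared is the mechanism. Your element $\Delta=\sum_{s\in S}(2-s-s^{-1})$ coincides with the paper's $\Delta_0=2\left(\#S-\sum_{s\in S}s\right)$ since $S=S^{-1}$, and your forward direction --- a Kazhdan pair gives the uniform bound $\operatorname{Spec}\pi(\Delta_0)\subseteq\{0\}\cup[\kappa^2,\infty)$, hence a spectral gap for $\Delta_0$ in $C^*_{\max}(G)$, and then functional calculus with a cutoff function $\chi$ produces $p_0=\chi(\Delta_0)$ --- is precisely the mechanism the paper uses later to build \emph{higher} Kazhdan projections: its existence proposition assumes a spectral gap for $\Delta_n$ in $\mathbb{M}_{k_n}(C^*_{\mathcal{F}}(G))$ and realizes $p_n$ as $\lim_{t\to\infty}e^{-t\Delta_n}$, which is the same spectral projection your $\chi(\Delta_0)$ gives. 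Your converse (existence of $p_0$ forces $(T)$), via approximating $p_0$ by $f\in\mathbb{C}G$ and playing the trivial representation against an almost-invariant unit vector of a representation with no invariant vectors, appears nowhere in the paper but is the standard argument and is carried out correctly; it is exactly this direction that makes the statement an equivalence rather than the one-way implication (spectral gap implies existence of the projection) that the paper actually needs. Two details worth tightening: the identity $\pi(\chi(\Delta_0))=\chi(\pi(\Delta_0))$ requires the extension of $\pi$ to $C^*_{\max}(G)$ to be unital (automatic for representations coming from unitary representations of $G$, but worth saying since $\chi(0)=1\neq 0$); and ``the universal representation'' should be taken as a direct sum over a \emph{set} of representations (e.g.\ the GNS representations of all states), which is what legitimizes identifying $\operatorname{Spec}_{C^*_{\max}(G)}(\Delta_0)$ with the closure of $\bigcup_\pi\operatorname{Spec}\pi(\Delta_0)$.
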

See e.g. \cite[Section 3.7]{higson-roe-book} and \cite{drutu-nowak} for a broader discussion of Kazhdan projections. 
We will now interpret Kazhdan projections in the setting of group cohomology.
Let $\pi$ be a unitary representation of $G$ on a Hilbert space $\HH$.
Denote by $d_0$ the $ \#S \times 1$ matrix 
$$\left[\begin{array}{c}1-s_1 \\\vdots \\1-s_n\end{array}\right],$$
where $s_i$ runs through the elements of $S$, with coefficients in $\mathbb{R}G$
and the Laplacian $\Delta_0\in \mathbb{R}G$ is 
$$\Delta_0 = d_0^*d_0 = 2\left(\#S- \sum_{s\in S} s\right) \ \ \ \ \in \mathbb{R}G.$$
For any unitary representation $\pi$ of $G$ on a Hilbert space $\HH$ we have  $$\ker \pi( \Delta_0) = \ker \pi(d_0) = \HH^{\pi},$$
where $\HH^\pi\subseteq \HH$ denotes the  closed subspace of invariant vectors of $\pi$. 
Note also that the same space can be interpreted as reduced\footnote{Recall that in a setting where the cochain spaces in a cochain complex 
$C^n\ _{\overrightarrow{\ \  d_n\ \ } }C^{n+1}$ are equipped with a Banach space structure the corresponding reduced cohomology is defined as the quotient $\overline{H}^n=\ker d_n\slash \overline{\operatorname{im }d_{n-1}}$, where  $\overline{\operatorname{im} d_{n-1}}$ is the closure of the image of the codifferential $d_{n-1}$. Cohomology is said to be reduced in degree $n$ if $\overline{H}^n=H^n$.}
 cohomology in degree 0:
$$\ker \pi(\Delta_0)= \HH^{\pi} \simeq \overline{H}^0(G,\pi).$$
The image $\pi(p_0)$ of the  Kazhdan projection is the orthogonal projection
$$C^0(G,\pi)\, _{\overrightarrow{\ \ \ \ \ \pi(p_0)\ \ \ \ \ }}\, \ker \pi( \Delta_0).$$
The Kazhdan projection $p_0$ exists in $C^*_{\mathcal{F}}(G)$ if and only if $\pi(\Delta_0)$ has a uniform spectral gap for all $\pi\in \mathcal{F}$.
The projection $p_0$ is non-zero if at least one $\pi\in\mathcal{F}$ has a non-zero invariant vector.

\subsection{Kazhdan projections in higher degree}

We will now discuss a generalization of Kazhdan projections in the setting of higher group cohomology.
Let $G$ be a group of type $F_{n+1}$  with a chosen model $X$ of $K(G,1)$ with finite $n+1$-skeleton. 

As in \cite{bader-nowak} we can consider the cochain complex for cohomology of $G$ with coefficients in $\pi$, where
$$C^n(G,\pi)=\set{f:X^{(n)}\to \HH},$$
 
 By $\mathbb{M}_{m\times m'}(\mathbb{R}G)=\mathbb{M}_{m\times m'}\otimes\mathbb{R}G$ we denote the space of 
 $m\times m'$ matrices with coefficients in the group ring $\mathbb{R}G$.
 When $m=m'$ the resulting algebra is denoted by $\mathbb{M}_{m}(\mathbb{R}G)$.
The codifferentials can be represented by elements  
$$d_n \in \mathbb{M}_{k_{n+1}\times k_{n}}( \mathbb{R}G),$$
where $k_i$ denotes the number of $i$-simplices in $X$,
in the sense that for every unitary representation $\pi$  the codifferential is given by the operator  
$$\pi(d_n):C^n(G,\pi)\to C^{n+1}(G,\pi),$$
where $\pi$ is applied to an element of $\mathbb{M}_{m\times m'}(C^*_{\mathcal{F}}(G))$ entry-wise;
see e.g.  \cite{bader-nowak-deformations,bader-nowak}.

The Laplacian element in degree $n$ is defined to be  
$$\Delta_n=d_n^*d_n+d_{n-1}d_{n-1}^* \in \mathbb{M}_{k_n}(\mathbb{R}G), $$ 
and we denote the two summands by
$$\Delta_n^+=d_n^*d_n,\ \ \ \ \ \Delta_n^-=d_{n-1}d_{n-1}^*,$$
as in \cite{bader-nowak}.
In particular, $\Delta_n$, $\Delta_n^+$, and $\Delta_n^-$ are elements of any completion of $\mathbb{M}_{k_n}( \mathbb{R}G)$ such as   
$\mathbb{M}_{k_n}( C^*_{\max}(G))$ or  $\mathbb{M}_{k_n}(C^*_r(G)).$

For a unitary representation $\pi$ of $G$ we can now consider the operator $\pi(\Delta_n)$. The kernel of $\pi(\Delta_n)$
is the space of harmonic $n$-cochains for $\pi$, and we have the standard Hodge-de Rham isomorphism 
$$\ker \pi(\Delta_n)\simeq \overline{H}^n(G,\pi),$$
between that kernel and the reduced cohomology of $G$ with coefficients in $\pi$ (see e.g. \cite[Section 3]{bader-nowak}).

Similarly, the kernel of the projection $\pi(\Delta_n^+)$ are the $n$-cocycles for $\pi$, and the kernel of $\pi(\Delta_n^-)$ are the 
$n$-cycles.

\begin{definition}[Higher Kazhdan projections]
A Kazhdan projection in degree $n$ is a projection $p_n=p_n^*=p_n^2\in \mathbb{M}_{k_n}(C^*_{\mathcal{F}}(G))$ 
such that
for every  unitary representation $\pi\in \mathcal{F}$ the projection $\pi(p_n)$ is the orthogonal projection $C^n(G,\pi)\to \ker \pi(\Delta_n)$.

A partial Kazhdan projection in degree $n$ is a projection $p_n^+, p_n^- \in \mathbb{M}_{k_n}(C^*_{\mathcal{F}}(G))$, such 
that for every unitary representation $\pi\in\mathcal{F}$ the projection $\pi(p_n^+)$ (respectively, $\pi(p_n^-)$) is the 
orthogonal projection onto the kernel of $\pi(d_n)$ (respectively, onto the kernel of $\pi(d_{n-1}^*)$). 
\end{definition}

In degree 0 and when $\mathcal{F}$ is the family of all unitary representations the projection $p_0\in C^*_{\max}(G)$ is the classical Kazhdan projection.
Clearly, $p_n$ is non-zero if and only if for at least one $\pi\in\mathcal{F}$ we have $\pi(p_n)\neq 0$.

From now on we will shorten the subscript $k_n$, denoting the number of $n$-simplices in the chosen $K(G,1)$, to $k$, as the dimension will
be clear from the context.
Similarly as in the case of property $(T)$, higher Kazhdan projections exist in the presence of a spectral gap.

\begin{proposition}
Assume that $\Delta_n \in \mathbb{M}_k (C^*_{\mathcal{F}}(G))$ (respectively,  $\Delta_n^+$, $\Delta_n^-$) has a spectral gap.
Then the Kazdhan projection $p_n$ (respectively, the partial Kazhdan projections $p_n^+$, $p_n^-$) exist in the $C^*$-algebra 
$\mathbb{M}_{k_n}(C^*_{\mathcal{F}}(G))$.
\end{proposition}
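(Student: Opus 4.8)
The plan is to build $p_n$ directly by continuous functional calculus applied to $\Delta_n$, using the spectral gap to produce a continuous function that becomes idempotent on the spectrum. First I would record that $\Delta_n = d_n^*d_n + d_{n-1}d_{n-1}^*$ is a positive self-adjoint element of the $C^*$-algebra $A = \mathbb{M}_{k_n}(C^*_{\mathcal{F}}(G))$, and that the spectral gap hypothesis means $\sigma(\Delta_n) \subseteq \{0\} \cup [\epsilon,\infty)$ for some $\epsilon > 0$. Choose a continuous function $\chi \colon [0,\infty) \to [0,1]$ with $\chi \equiv 1$ on $[0,\epsilon/2]$ and $\chi \equiv 0$ on $[\epsilon,\infty)$, and set $p_n = \chi(\Delta_n) \in A$. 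Since $\chi$ is real-valued, $p_n$ is self-adjoint; and because the restriction of $\chi$ to $\sigma(\Delta_n)$ coincides with the indicator of $\{0\}$, we have $\chi^2 = \chi$ on $\sigma(\Delta_n)$, whence $p_n^2 = p_n$. Thus $p_n$ is a genuine projection in $A$.

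Second, I would verify the defining property representation by representation. Each $\pi \in \mathcal{F}$ extends, by the very definition of the norm $\Vert\cdot\Vert_{\mathcal{F}}$, to a $*$-representation of $C^*_{\mathcal{F}}(G)$, and hence entrywise to a $*$-representation of $A$ acting on $C^n(G,\pi)$. Since $*$-homomorphisms intertwine continuous functional calculus, $\pi(p_n) = \pi(\chi(\Delta_n)) = \chi(\pi(\Delta_n))$. Now $\pi(\Delta_n)$ is positive with $\sigma(\pi(\Delta_n)) \subseteq \sigma(\Delta_n) \subseteq \{0\} \cup [\epsilon,\infty)$, so $\chi(\pi(\Delta_n))$ is exactly the orthogonal projection of $C^n(G,\pi)$ onto the $0$-eigenspace $\ker\pi(\Delta_n)$, i.e. onto the harmonic $n$-cochains. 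This is precisely the required property, so $p_n$ is the higher Kazhdan projection.

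Finally, the partial projections follow from the identical argument with $\Delta_n$ replaced by $\Delta_n^+ = d_n^*d_n$ or $\Delta_n^- = d_{n-1}d_{n-1}^*$: each is again a positive self-adjoint element of $A$, and applying the same cut-off $\chi$ (with the respective gap) yields projections $p_n^+, p_n^-$. It remains only to note the kernel identifications $\ker\pi(\Delta_n^+) = \ker\pi(d_n)$ and $\ker\pi(\Delta_n^-) = \ker\pi(d_{n-1}^*)$, which hold because $\langle \pi(d_n^*d_n)v, v\rangle = \Vert\pi(d_n)v\Vert^2$ and symmetrically for the second summand. I expect the only genuinely delicate point to be the interplay of functional calculus with the whole family $\mathcal{F}$ at once: one must confirm that the single element $p_n \in A$ specializes correctly under every $\pi \in \mathcal{F}$, which reduces to the standard facts that a $*$-representation cannot enlarge the spectrum and commutes with continuous functional calculus. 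The spectral gap is exactly what allows $\chi$ to be chosen continuous and independent of $\pi$, so that one element of $A$ works uniformly across the entire family.
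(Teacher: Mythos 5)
Your proof is correct and takes essentially the same approach as the paper: both exploit the spectral gap $\sigma(\Delta_n)\subseteq\{0\}\cup[\epsilon,\infty)$ to apply continuous functional calculus to $\Delta_n$ in $\mathbb{M}_{k_n}(C^*_{\mathcal{F}}(G))$, obtaining the spectral projection onto the kernel, and the same reasoning handles $\Delta_n^+$ and $\Delta_n^-$. The only cosmetic difference is that the paper realizes this projection as the heat-semigroup limit $\lim_{t\to\infty}e^{-t\Delta_n}$ while you use a single continuous cutoff function $\chi$; your representation-by-representation verification simply spells out what the paper summarizes as ``by construction, $p_n$ has the required properties.''
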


\begin{proof}
By assumption, the spectrum of $\Delta_n$ satisfies 
\begin{equation*}\label{equation: spectral gap}
\sigma(\pi(\Delta_n))\subseteq \{0\}\cup [\epsilon,\infty)
\end{equation*}
for every unitary representation $\pi\in \mathcal{F}$ and some $\epsilon>0$. 
In particular, $\sigma(\Delta_n)\subseteq \{0\}\cup [\epsilon,\infty)$
when $\Delta_n$ is viewed as an element of $\mathbb{M}_k (C^*_{\mathcal{F}}(G))$.

Due to the presence of the spectral gap we can apply continuous functional calculus and conclude that the limit of the heat semigroup
$$\lim_{t\to \infty} e^{-t\Delta_n}$$ is the spectral projection  $p_n \  \in \mathbb{M}_k(C^*_{\mathcal{F}}(G))$.
By construction, $p_n$ has the required properties.

The proof for the partial projections is analogous. 
\end{proof}
Note that the fact that the spectrum of $\Delta_n \in \mathbb{M}_k(C^*_{\mathcal{F}}(G))$ has a gap implies that both $\Delta_n^+$ and $\Delta_n^-$ 
also have spectral gaps and in particular all three higher Kazhdan projections exist in 
$\mathbb{M}_{k}(C^*_{\mathcal{F}}(G))$.
We have the following decomposition for higher Kazhdan projections.

\begin{lemma}
Assume that   $\Delta_n \in \mathbb{M}_k(C^*_{\mathcal{F}}(G))$ has a spectral gap. Then 
$$p_n=p_n^+p_n^-.$$
\end{lemma}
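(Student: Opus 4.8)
The plan is to reduce the claimed factorization to a single algebraic identity between the two summands of the Laplacian, namely
\[
\Delta_n^+\Delta_n^-=\Delta_n^-\Delta_n^+=0.
\]
Since $\Delta_n^+=d_n^*d_n$ and $\Delta_n^-=d_{n-1}d_{n-1}^*$, this is immediate from the cochain relation $d_nd_{n-1}=0$ together with its adjoint $d_{n-1}^*d_n^*=0$: the first product contains the factor $d_nd_{n-1}$ and the second the factor $d_{n-1}^*d_n^*$. This is the only computation I would carry out by hand, and it presents no difficulty.

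Given this identity, $\Delta_n^+$ and $\Delta_n^-$ are commuting positive elements of $\mathbb{M}_k(C^*_{\mathcal{F}}(G))$ with vanishing product. By the remark following the previous proposition, the spectral gap of $\Delta_n$ forces spectral gaps for both $\Delta_n^+$ and $\Delta_n^-$, so that $0$ is isolated in each of the three spectra and the spectral projections $p_n=\chi_{\{0\}}(\Delta_n)$, $p_n^+=\chi_{\{0\}}(\Delta_n^+)$ and $p_n^-=\chi_{\{0\}}(\Delta_n^-)$ are genuine elements of the $C^*$-algebra. Here I use that $\ker\pi(d_n)=\ker\pi(\Delta_n^+)$ and $\ker\pi(d_{n-1}^*)=\ker\pi(\Delta_n^-)$ for every $\pi\in\mathcal{F}$, so that, by faithfulness of $\mathcal{F}$, the partial projections of the definition coincide with these spectral projections.

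I would then pass to the commutative $C^*$-subalgebra generated by $\Delta_n^+$ and $\Delta_n^-$, whose commutativity is exactly the identity above. Under the Gelfand transform the two operators become non-negative continuous functions with disjoint supports, so their sum $\Delta_n$ vanishes at a point precisely when both summands do. This gives the pointwise identity
\[
\chi_{\{0\}}(\Delta_n^++\Delta_n^-)=\chi_{\{0\}}(\Delta_n^+)\,\chi_{\{0\}}(\Delta_n^-),
\]
and transporting it back through the Gelfand isomorphism yields $p_n=p_n^+p_n^-$; in particular $p_n^+p_n^-$ is automatically a self-adjoint projection.

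As a cross-check, the statement can also be seen representation by representation: for each $\pi\in\mathcal{F}$ the spectral gap makes the images closed, so the Hodge decomposition $C^n(G,\pi)=\overline{\operatorname{im}\pi(d_{n-1})}\oplus\ker\pi(\Delta_n)\oplus\overline{\operatorname{im}\pi(d_n^*)}$ renders $\pi(p_n^+)$ and $\pi(p_n^-)$ simultaneously block-diagonal, projecting onto the first two and the last two blocks respectively; their product is the projection onto the middle block $\ker\pi(\Delta_n)=\pi(p_n)$. The only point needing genuine care in either route is ensuring that all three projections actually lie in $\mathbb{M}_k(C^*_{\mathcal{F}}(G))$, which is precisely what the hypothesis on the spectral gap supplies; there is no substantive analytic obstacle beyond this.
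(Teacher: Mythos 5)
Your proposal is correct and follows essentially the same route as the paper: both hinge on the identity $\Delta_n^+\Delta_n^-=\Delta_n^-\Delta_n^+=0$ (from $d_nd_{n-1}=0$), from which the spectral projections commute and their product is the projection onto the intersection of the kernels, i.e.\ onto $\ker\Delta_n$. Your Gelfand-transform computation and the Hodge-decomposition cross-check merely spell out in more detail what the paper asserts in one line.
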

\begin{proof}
It suffices to notice that $\Delta_n^+$ and $\Delta_n^-$ commute in  $\mathbb{M}_k(C^*_{\mathcal{F}}(G))$ and satisfy
$$\Delta_n^+\Delta_n^-=\Delta_n^-\Delta_n^+=0,$$
using the property that $d_id_{i-1}=0$ for every $i\in \mathbb{N}$. Consequently, the corresponding spectral projections commute
and their product is the projection onto the intersections of their kernels, which is precisely the kernel of $\Delta_n$.
\end{proof}

\subsection{$K$-theory classes - a special case}

In this section we consider the case when $\mathcal{F}=\{ \lambda\}$, where $\lambda$ is the left regular 
representation of $G$ on $\ell_2(G)$ and $C^*_{\mathcal{F}}(G)=C^*_r(G)$ is the reduced group $C^*$-algebra 
of $G$. We refer to \cite{higson-roe-book, valette} for background on $K$-theory and assembly maps.

The projection $p_n\in \mathbb{M}_k(C^*_r(G))$ exists if $\lambda(\Delta_n)$ has a spectral gap at $0$.
In this case the kernel of the Laplacian $\lambda(\Delta_n)$ is isomorphic to the reduced $\ell_2$-cohomology of $G$:
$$\ker \lambda(\Delta_n)\simeq \ell_2\overline{H}^n(G) \simeq \ker \lambda(d_n)\big\slash \,\overline{ \operatorname{im} \lambda(d_{n-1})},$$
and it is non-trivial if and only the $n$-th $\ell_2$-Betti number $\beta_{(2)}^n(G)$ is non-zero.

The projection $p_n\in \mathbb{M}_k( C^*_r(G))$ gives rise to a $K$-theory class $[p_n]\in K_0(C^*_r(G))$.
We are interested in the properties of the classes defined by the higher Kazhdan projections $[p_n]\in K_0(C^*_r(G))$.

Recall that the canonical trace $\tau_G$ on $C^*_r(G)$ is given by 
$$\tau_G(\alpha)=\langle \alpha \delta_e,\delta_e\rangle,$$
where $\delta_e\in \ell_2(G)$ is the Dirac delta at the identity element $e\in G$. For an element $\alpha\in \mathbb{C}G$ we have $\tau(\alpha)=\alpha(e)$.
The induced trace on $\mathbb{M}_k (C^*_r(G))$ is defined as 
$$\tau_{k,G}\left((a_{ij})\right) = \sum_{i=1}^k \tau_G(a_{ii}),$$
and there is an induced map 
$$\tau_*: K_0(C^*_r(G))\to \mathbb{R}$$ 
defined by $\tau_*([p])=\tau_{k,G}(p)$,
for a class represented by a projection $p\in \mathbb{M}_k(C^*_r(G))$.

\begin{proposition}\label{proposition: existence of higher Kazhdan projection in M_n(C* reduced)}
Assume that  $\Delta_n$ has a spectral gap  in $\mathbb{M}_k(C^*_r(G))$. 
Then $$\tau_*([p_n])= \beta_{(2)}^n(G).$$
In particular, if $\beta_{(2)}^n(G)\neq 0$ then 
 $[p_n] \neq 0$ in  $K_0(C^*_r(G))$.
\end{proposition}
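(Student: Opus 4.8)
The plan is to compute the trace $\tau_{k,G}(p_n)$ directly and recognize it as the von Neumann dimension of the range of $p_n$, which by definition is the $n$-th $\ell_2$-Betti number. Recall that, following \cite{luck}, $\beta_{(2)}^n(G)$ is precisely the von Neumann dimension $\dim_{L(G)} \ker \lambda(\Delta_n)$ of the space of harmonic $\ell_2$-cochains, computed with respect to the group von Neumann algebra $L(G) = \lambda(G)''$ and its canonical trace. Since $\tau_G$ extends from $C^*_r(G)$ to $L(G)$ without change, it suffices to show that $p_n$, viewed inside $\mathbb{M}_k(L(G))$, is the orthogonal projection onto $\ker \lambda(\Delta_n)$, and then to invoke the identity $\dim_{L(G)}(\operatorname{range} q) = \tau_{k,G}(q)$ valid for any projection $q \in \mathbb{M}_k(L(G))$.

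First I would check that the spectral projection $p_n$ constructed in $\mathbb{M}_k(C^*_r(G))$ agrees with the kernel projection in $\mathbb{M}_k(L(G))$. Because $\Delta_n$ has a spectral gap, its spectrum is contained in $\{0\}\cup[\epsilon,\infty)$, so the indicator function $\chi_{\{0\}}$ is continuous on $\sigma(\Delta_n)$ and continuous functional calculus produces $p_n = \chi_{\{0\}}(\Delta_n) \in \mathbb{M}_k(C^*_r(G))$. Under the inclusion $\mathbb{M}_k(C^*_r(G)) \hookrightarrow \mathbb{M}_k(L(G)) \subseteq B(\ell_2(G)^k)$, the element $\lambda(p_n)$ is exactly the orthogonal projection onto the kernel of $\lambda(\Delta_n)$, i.e. onto $\ker \lambda(\Delta_n) \simeq \ell_2\overline{H}^n(G)$, since spectral calculus for the self-adjoint operator $\lambda(\Delta_n)$ sends $\chi_{\{0\}}$ to the orthogonal projection onto its kernel.

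Next I would combine these identifications. The trace $\tau_{k,G}$ on $\mathbb{M}_k(C^*_r(G))$ is the restriction of the canonical trace $\operatorname{Tr}\otimes\tau_G$ on $\mathbb{M}_k(L(G))$, and for a projection the value of this trace equals the von Neumann dimension of its range. Hence
$$\tau_{k,G}(p_n) = \dim_{L(G)} \operatorname{range}\lambda(p_n) = \dim_{L(G)} \ker \lambda(\Delta_n) = \beta_{(2)}^n(G),$$
which gives $\tau_*([p_n]) = \beta_{(2)}^n(G)$. For the final assertion, $\tau_*$ is a well-defined homomorphism $K_0(C^*_r(G)) \to \mathbb{R}$ sending the zero class to $0$; therefore if $\beta_{(2)}^n(G) \neq 0$ then $\tau_*([p_n]) \neq 0$, forcing $[p_n]\neq 0$ in $K_0(C^*_r(G))$.

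The main obstacle, and the step requiring the most care, is the second one: ensuring that the $C^*$-algebraic spectral projection really computes the $\ell_2$-Betti number. This hinges on the spectral gap hypothesis, which guarantees both that $0$ is isolated (so that the kernel projection lies in $C^*_r(G)$ rather than merely in $L(G)$) and that functional calculus in the $C^*$-algebra coincides with the weak-closure spectral projection. Without the gap the projection $p_n$ need not exist in $C^*_r(G)$ at all, and the trace argument would not apply. The remaining ingredients, namely trace-preservation under $C^*_r(G)\hookrightarrow L(G)$ and the equality between the trace of a projection and the von Neumann dimension of its range, are standard, as is the definition of $\beta_{(2)}^n(G)$ via the combinatorial Laplacian of the chosen $K(G,1)$.
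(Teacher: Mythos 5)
Your proof is correct and follows essentially the same route as the paper: identify $p_n$ with the orthogonal projection onto $\ker\lambda(\Delta_n)$ inside $\mathbb{M}_k(L(G))$, note that $\beta_{(2)}^n(G)$ is by definition the von Neumann dimension of that kernel, and conclude via the equality of $\tau_{k,G}$ with the canonical trace on $\mathbb{M}_k(L(G))$. The paper's proof is just a terser version of this argument; your added details (continuity of $\chi_{\{0\}}$ on the gapped spectrum, compatibility of the functional calculus with the inclusion into $L(G)$) are exactly the steps it leaves implicit.
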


\begin{proof}
By definition, the $n$-th $\ell_2$ Betti number of $G$ is the von Neumann dimension (over the group von Neumann algebra $L(G)$) 
of the 
orthogonal projection onto the kernel of the Laplacian. Under our assumptions the projection 
is now an element of the smaller algebra $\mathbb{M}_k(C^*_r(G))\subseteq \mathbb{M}_k(L(G))$ and
$$\beta_{(2)}^n(G) = \tau_{k,G}(p_n).$$
Therefore, the induced trace on the group $K_0(C^*_r(G))$ satisfies $\tau_*([p_n])= \beta_{(2)}^n(G)$,
as claimed
\end{proof}

A similar argument shows non-vanishing of the classes represented by the partial projections.
\begin{proposition}
Assume that $0$ is isolated in the spectrum of $\Delta_n^+$ (respectively, $\Delta_n^-$) in $\mathbb{M}_k(C^*_r(G))$. 
If $\beta_{(2)}^n(G)\neq 0$ then
 $[p_n^+] \neq 0$ (respectively, $[p_n^-]\neq 0$) in  $K_0(C^*_r(G))$.
\end{proposition}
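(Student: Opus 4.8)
The plan is to reproduce the trace computation used for the full projection in Proposition~\ref{proposition: existence of higher Kazhdan projection in M_n(C* reduced)}, replacing the exact equality by an inequality that still forces positivity of the trace. First I would invoke the earlier existence proposition: the hypothesis that $0$ is isolated in the spectrum of $\Delta_n^+$ (respectively $\Delta_n^-$) is precisely the spectral gap condition for that operator, so the partial projection $p_n^+$ (respectively $p_n^-$) genuinely exists in $\mathbb{M}_k(C^*_r(G))$, and in the regular representation $\lambda(p_n^+)$ is the orthogonal projection onto $\ker\lambda(d_n)$ (respectively $\lambda(p_n^-)$ onto $\ker\lambda(d_{n-1}^*)$). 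I would emphasise that no gap for the full Laplacian $\Delta_n$ is needed here: a gap for the single summand already places the relevant projection inside the reduced algebra, so that it defines a class in $K_0(C^*_r(G))$.

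Next I would compute the induced trace. Since $\mathbb{M}_k(C^*_r(G))\subseteq\mathbb{M}_k(L(G))$ and $\tau_{k,G}$ is the restriction of the von Neumann dimension $\dim_{L(G)}$, we obtain
$$\tau_*([p_n^+])=\tau_{k,G}(p_n^+)=\dim_{L(G)}\ker\lambda(d_n),$$
the von Neumann dimension of the space of $\ell_2$ $n$-cocycles, and similarly $\tau_*([p_n^-])=\dim_{L(G)}\ker\lambda(d_{n-1}^*)$, the dimension of the space of $\ell_2$ $n$-cycles.

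The key observation is then the pair of inclusions of closed $L(G)$-invariant subspaces
$$\ker\lambda(\Delta_n)\subseteq\ker\lambda(d_n),\qquad \ker\lambda(\Delta_n)\subseteq\ker\lambda(d_{n-1}^*),$$
which hold because $\ker\lambda(\Delta_n)=\ker\lambda(d_n)\cap\ker\lambda(d_{n-1}^*)$, so a harmonic cochain is in particular both a cocycle and a cycle. By monotonicity of the von Neumann dimension under inclusion of closed subspaces, each of the two traces above is bounded below by $\dim_{L(G)}\ker\lambda(\Delta_n)=\beta_{(2)}^n(G)$. Since $\beta_{(2)}^n(G)>0$ by hypothesis, both $\tau_*([p_n^+])$ and $\tau_*([p_n^-])$ are strictly positive, and faithfulness of the induced trace on $K_0$ then yields $[p_n^+]\neq 0$ and $[p_n^-]\neq 0$.

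The argument is essentially routine once the von Neumann dimension framework is in place; the only point requiring a little care is the identification of $\tau_{k,G}(p_n^{\pm})$ with the $L(G)$-dimension of the cocycle and cycle spaces, i.e. the compatibility of the trace $\tau_{k,G}$ on $C^*_r(G)$ with its extension to $L(G)$. I do not anticipate a genuine obstacle here, as this is exactly the mechanism already exploited for the full projection $p_n$, and the inequality ultimately rests only on the trivial monotonicity of $\dim_{L(G)}$ together with the defining inclusion of the harmonic space into both partial kernels.
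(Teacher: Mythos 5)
Your proof is correct and follows essentially the same route as the paper: existence of $p_n^{\pm}$ in $\mathbb{M}_k(C^*_r(G))$ from the spectral gap of the single summand, the inclusion of the harmonic space $\ker\lambda(\Delta_n)$ (whose $L(G)$-dimension is $\beta_{(2)}^n(G)$, with the projection $p_n$ living in $\mathbb{M}_k(L(G))$ even without a gap for the full Laplacian) into $\ker\lambda(d_n)$ (respectively $\ker\lambda(d_{n-1}^*)$), and monotonicity of the von Neumann dimension to conclude $\tau_*([p_n^{\pm}])\ge\beta_{(2)}^n(G)>0$. One wording caveat: the final step uses only that $\tau_*$ is a well-defined homomorphism on $K_0$, so a nonzero trace forces a nonzero class; no \emph{faithfulness} of the induced trace is needed (or available in general).
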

\begin{proof} 
By assumption $p_n^+$ exists in $\mathbb{M}_k(C^*_r(G))$ and $p_n$ exists in $\mathbb{M}_k(L(G))$ and we will compare the 
values of the trace over the latter algebra.
Since $\tau_{k,G}(p_n)\neq 0$ and 
$$\operatorname{im}(p_n)\subseteq \operatorname{im} (p_n^+)$$
by monotonicity of the von Neumann dimension over $L(G)$
we have 
$$\tau_{k,G}(p_n^+)\ge \tau_{k,G}(p_n)>0.$$
Consequently $\tau_*([p_n^+])\neq 0$ in $K_0(C^*_r(G))$.
\end{proof}

\subsection{The Baum-Connes assembly map}

The Baum-Connes assembly map is the map 
$$\mu_i:K_i^{G}(\underline{E}G)\to K_i(C^*_r(G)),$$
$i=0,1$.
The trace conjecture of Baum and Connes and the modified trace conjecture by L\"{u}ck \cite{luck} predict the range of the composition
$\tau_*\circ\mu_0$.
In particular, the modified trace conjecture, formulated and proved by L\"{u}ck \cite{luck},
states that  $\tau_*\circ\mu_i$ takes values in a subring $\Lambda^G\subseteq \mathbb{Q}$, 
generated from $\mathbb{Z}$ by adjoining the inverses of cardinalities of finite subgroups.

\begin{theorem}[L\"{u}ck {\cite[Theorem 0.3]{luck}}]
If the Baum-Connes assembly map is surjective then the composition $\tau_*\circ \mu_0$ takes values in the ring 
$\Lambda^G\subseteq \mathbb{Q}$.
\end{theorem}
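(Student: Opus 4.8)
This is L\"{u}ck's theorem \cite[Theorem 0.3]{luck}, so rather than inventing a new argument I would reconstruct the strategy behind it, isolating the two mechanisms that force the values of $\tau_*\circ\mu_0$ into $\Lambda^G$. The plan is to reduce the computation of $\tau_*\circ\mu_0$ to the contributions of finite subgroups of $G$, and then to evaluate the trace on those contributions explicitly.

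First I would record the local computation at a finite subgroup. For a finite subgroup $H\leq G$ the inclusion $\mathbb{C}H\subseteq C^*_r(G)$ identifies $\tau_G|_{\mathbb{C}H}$ with the canonical trace $\tau_H$ on $\mathbb{C}H$, since both return the coefficient of the identity element. Because $\tau_H=\frac{1}{|H|}\operatorname{Tr}_{\mathrm{reg}}$, where $\operatorname{Tr}_{\mathrm{reg}}$ is the trace of the regular representation $\mathbb{C}H\hookrightarrow \mathbb{M}_{|H|}(\mathbb{C})$, and the latter sends any projection in a matrix algebra over $\mathbb{C}H$ to an integer, every projection $p\in\mathbb{M}_k(\mathbb{C}H)$ satisfies
$$\tau_G(p)=\tau_H(p)\in \tfrac{1}{|H|}\mathbb{Z}\subseteq \Lambda^G.$$
The same holds for induced classes: induction $K_0(C^*_r(H))\to K_0(C^*_r(G))$ from a finite subgroup is the map induced by the inclusion $\mathbb{C}H\hookrightarrow C^*_r(G)$ and is therefore compatible with the canonical traces, so any class pulled up from $H$ has trace in $\frac{1}{|H|}\mathbb{Z}$.

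Second, I would reduce the global problem to these local pieces. One chooses a model of $\underline{E}G$ built from equivariant cells $G/H\times D^m$ with $H$ finite; the associated equivariant Atiyah--Hirzebruch (or Mayer--Vietoris) filtration reconstructs $K_0^G(\underline{E}G)$ from the pieces $K_0^G(G/H)\cong R(H)$, the complex representation ring. The delicate point is that assembly followed by $\tau_*$ should factor through these finite-subgroup contributions; to see this cleanly one passes to rational coefficients and invokes L\"{u}ck's equivariant Chern character, which splits $K_0^G(\underline{E}G)\otimes\mathbb{Q}$ into summands indexed by conjugacy classes of finite cyclic subgroups. Matching this decomposition against the local computation above shows that $\tau_*\circ\mu_0$ takes values in $\sum_{H}\frac{1}{|H|}\mathbb{Z}$, and a closer analysis of the multiplicative structure upgrades the target from this additive group to the ring $\Lambda^G$.

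The main obstacle is precisely this second reduction: controlling the contribution of the higher-dimensional cells of $\underline{E}G$ and proving that the trace of an assembled class is determined by finite-subgroup data. That the subtlety is genuine is shown by the fact that the \emph{original} Baum--Connes trace conjecture, predicting values in the additive subgroup generated by the numbers $|H|^{-1}$, is false; the passage to the ring $\Lambda^G$ is exactly L\"{u}ck's correction, and the rationalized Chern character together with its integrality constraints is the technical heart that cannot be bypassed. For the purposes of the present paper the theorem is needed only as a black box, so I would simply cite it and apply it to the class $[p_n]$ from Proposition \ref{proposition: existence of higher Kazhdan projection in M_n(C* reduced)}, where $\tau_*([p_n])=\beta_{(2)}^n(G)$ then lands in $\Lambda^G$ whenever the assembly map is surjective.
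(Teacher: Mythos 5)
The paper offers no proof of this theorem: it is L\"{u}ck's result, quoted verbatim with its citation and used purely as a black box in the proof of Proposition \ref{proposition: Baum-Connes main}, so your plan to cite it and apply it to the class $[p_n]$ is exactly the paper's approach. One caveat on your supplementary sketch of L\"{u}ck's argument: the intermediate claim that matching the Chern character decomposition against the finite-subgroup computation yields values in the additive group $\sum_{H}|H|^{-1}\mathbb{Z}$ cannot be right, since that bound is false in general even for assembled classes (Roy's counterexample \cite{roy} to the original trace conjecture, which you yourself invoke two sentences later); the correct statement, and the point of L\"{u}ck's refinement, is that the values lie only in the strictly larger ring $\Lambda^G$, so the passage to the ring is a necessary weakening of the target rather than an \emph{upgrade} obtained by a closer analysis --- but as you defer the technical heart to L\"{u}ck in any case, nothing used in the paper is affected.
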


We can now state the relation between the Baum-Connes conjecture and $\ell_2$-Betti numbers.

\newcounter{aaa}
\setcounter{aaa}{\value{theorem}}

\setcounter{theorem}{0}
\begin{proposition}
Let $G$ be of type $F_{n+1}$. Assume that $0$ is isolated in the spectrum of $\Delta_n\in \mathbb{M}_k(C^*_r(G)) $. 
If the Baum-Connes assembly map $K_0^G(\underline{E}G)\to K_0(C^*_r(G))$  
is surjective 
then $$\beta_{(2)}^n(G)\in \Lambda^G.$$  
In particular, if $G$ is torsion-free then $\beta_{(2)}^n(G)\in \mathbb{Z}.$
\end{proposition}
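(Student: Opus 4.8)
The plan is to assemble the pieces already established in this section into a short chain of implications, with the spectral gap hypothesis doing the essential work of placing the relevant projection inside the reduced group $C^*$-algebra rather than merely in the group von Neumann algebra.

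First I would invoke the hypothesis that $0$ is isolated in the spectrum of $\Delta_n\in\mathbb{M}_k(C^*_r(G))$. By the existence result for higher Kazhdan projections established above, this spectral gap guarantees that the projection $p_n$ exists as a genuine element of $\mathbb{M}_k(C^*_r(G))$, and not only of the larger algebra $\mathbb{M}_k(L(G))$. Consequently $p_n$ determines a class $[p_n]\in K_0(C^*_r(G))$, and by Proposition~\ref{proposition: existence of higher Kazhdan projection in M_n(C* reduced)} the induced trace computes the $\ell_2$-Betti number, namely $\tau_*([p_n])=\beta_{(2)}^n(G)$.

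Next I would use surjectivity of the Baum-Connes assembly map $\mu_0:K_0^{G}(\underline{E}G)\to K_0(C^*_r(G))$ to write $[p_n]=\mu_0(x)$ for some $x\in K_0^G(\underline{E}G)$. Applying L\"{u}ck's theorem (quoted above) to the composition $\tau_*\circ\mu_0$, whose range lies in the subring $\Lambda^G\subseteq\mathbb{Q}$, I would conclude
$$\beta_{(2)}^n(G)=\tau_*([p_n])=\tau_*(\mu_0(x))\in\Lambda^G.$$
For the final assertion I would observe that when $G$ is torsion-free its only finite subgroup is trivial, so adjoining inverses of orders of finite subgroups to $\mathbb{Z}$ adds nothing and $\Lambda^G=\mathbb{Z}$; hence $\beta_{(2)}^n(G)\in\mathbb{Z}$.

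The argument presents no genuine obstacle once the earlier machinery is in place; the only point requiring care is conceptual rather than technical, namely that the spectral gap is precisely the hypothesis needed to realize $p_n$ as a $K$-theory class in $K_0(C^*_r(G))$ that can be hit by the assembly map. Were $0$ merely contained in the spectrum of $\Delta_n$ without being isolated, the projection $p_n$ would exist only in the von Neumann completion $\mathbb{M}_k(L(G))$, no class in $K_0(C^*_r(G))$ would be available, and the conclusion could not be drawn.
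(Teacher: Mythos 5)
Your proof is correct and is precisely the argument the paper intends: the spectral gap realizes $p_n$ as an element of $\mathbb{M}_k(C^*_r(G))$ with $\tau_*([p_n])=\beta_{(2)}^n(G)$, surjectivity of the assembly map together with L\"{u}ck's theorem places this trace value in $\Lambda^G$, and torsion-freeness gives $\Lambda^G=\mathbb{Z}$. The paper leaves this chain implicit, stating the proposition as a direct consequence of the spectral gap, the trace computation and L\"{u}ck's result, so your write-up simply makes the same argument explicit.
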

\setcounter{theorem}{\value{aaa}}

\begin{remark}[The Euler class]\normalfont \label{remark: Euler class}
Consider an infinite group $G$ such that $K(G,1)$ can be chosen to be a finite complex.
Assume that $\lambda(\Delta_n)$ has a spectral gap in $\mathbb{M}_k(C^*_r(G))$ for every $n\ge 0$ 
(in particular, $G$ is non-amenable). 
Then we can define the Euler class 
$$\Xi(G)=\sum_{i\ge 0}(-1)^i [p_i]$$
in the $K$-theory $K_0(C^*_r(G))$.
We have that 
$$\tau_*(\Xi(G))=\chi_{(2)}(G)$$ 
is the $\ell_2$-Euler characteristic of the chosen $K(G,1)$.

Atiyah's $L_2$-index theorem associates the $L_2$-index, analogous to the Fredholm index but defined via the trace on the von Neumann algebra,
to a lift $\widetilde{D}$ of an elliptic operator $D$ on a compact manifold $M$ to the universal cover of $M$.
Loosely speaking, in the presence of the spectral gap the $L_2$-indices of certain operators appearing in the  Atiyah $L_2$-index theorem 
are in fact traces of their Baum-Connes indices. 
\end{remark}

\subsection{Examples}
In case of an infinite group $G$ the projection $\lambda(p_0)\in C^*_r(G)$ is always 0 since the kernel of $\lambda(\Delta_0)$ 
consists precisely of constant functions in $\ell_2(G)$.

We will now discuss certain cases in which higher Kazhdan projections exist.
We will use the following fact, which is a reformulation of \cite[Proposition 16, 2) and 3)]{bader-nowak}
\begin{lemma}\label{lemma : laplacian spectral gap if cohomologies reduced}
Let $\pi$ be a unitary representation of $G$.
The Laplacian $\pi(\Delta_n)$ has a spectral gap around $0$ if and only if the cohomology groups $H^n(G,\pi)$ and $H^{n+1}(G,\pi)$ are both reduced.
\end{lemma}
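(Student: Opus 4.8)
The plan is to reduce the single spectral-gap condition on $\pi(\Delta_n)$ to two separate gap conditions on the summands $\pi(\Delta_n^+)$ and $\pi(\Delta_n^-)$, and then to identify each of these with the closedness of an image, hence with reducedness of cohomology in the appropriate degree. Since this is a reformulation of the cited result, the analytic content lies entirely in the equivalence between a spectral gap and a closed range.

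First I would exploit the orthogonality already recorded above: because $\pi(\Delta_n^+)\pi(\Delta_n^-)=\pi(\Delta_n^-)\pi(\Delta_n^+)=0$ and both summands are positive, one has $\ker\pi(\Delta_n)=\ker\pi(\Delta_n^+)\cap\ker\pi(\Delta_n^-)$, and the ranges $\overline{\operatorname{im}\pi(d_n^*)}$ and $\overline{\operatorname{im}\pi(d_{n-1})}$ are mutually orthogonal and orthogonal to $\ker\pi(\Delta_n)$. This yields the Hodge-type decomposition
$$C^n(G,\pi)=\overline{\operatorname{im}\pi(d_{n-1})}\oplus\ker\pi(\Delta_n)\oplus\overline{\operatorname{im}\pi(d_n^*)},$$
on which $\pi(\Delta_n)$ acts as $\pi(\Delta_n^-)$, as $0$, and as $\pi(\Delta_n^+)$ respectively (using $d_nd_{n-1}=0$, so that $\Delta_n^+$ annihilates $\operatorname{im}\pi(d_{n-1})$ and $\Delta_n^-$ annihilates $\operatorname{im}\pi(d_n^*)$). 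Reading off the spectrum from this decomposition, $0$ is isolated in $\sigma(\pi(\Delta_n))$ if and only if $0$ is isolated in both $\sigma(\pi(\Delta_n^+))$ and $\sigma(\pi(\Delta_n^-))$.

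Next I would invoke the standard functional-analytic equivalence that for a bounded operator $T$ between Hilbert spaces the range of $T$ is closed if and only if $0$ is isolated in the spectrum of $T^*T$, equivalently $T^*T$ is bounded below on $(\ker T)^\perp$; the relevant invariant is the reduced minimum modulus $\gamma(T)$, with the gap equal to $\gamma(T)^2$. Applying this with $T=\pi(d_n)$ shows that $\pi(\Delta_n^+)=\pi(d_n)^*\pi(d_n)$ has a gap at $0$ if and only if $\operatorname{im}\pi(d_n)$ is closed. Applying it with $T=\pi(d_{n-1})$, and using that $TT^*$ and $T^*T$ have the same nonzero spectrum, shows that $\pi(\Delta_n^-)=\pi(d_{n-1})\pi(d_{n-1})^*$ has a gap at $0$ if and only if $\operatorname{im}\pi(d_{n-1})$ is closed.

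Finally I would translate closedness of images into reducedness of cohomology: by the definition of reduced cohomology recalled in the footnote, $H^{n}(G,\pi)$ is reduced precisely when $\operatorname{im}\pi(d_{n-1})$ is closed, and $H^{n+1}(G,\pi)$ is reduced precisely when $\operatorname{im}\pi(d_n)$ is closed. Chaining the three equivalences then gives that $\pi(\Delta_n)$ has a spectral gap around $0$ if and only if both summands do, if and only if both $\operatorname{im}\pi(d_n)$ and $\operatorname{im}\pi(d_{n-1})$ are closed, if and only if both $H^{n+1}(G,\pi)$ and $H^n(G,\pi)$ are reduced. I expect the main obstacle to be the clean justification of the second paragraph — that a gap for the full Laplacian is equivalent to simultaneous gaps for the two summands — which hinges on the orthogonality of the ranges and the vanishing of the cross terms; once the Hodge decomposition is set up rigorously this is immediate, and the substantive analytic input is the closed-range criterion of the third paragraph.
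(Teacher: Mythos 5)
Your proof is correct and takes essentially the same route as the paper's sketch: the paper's orthogonal decomposition $C^n=C_n^+\oplus\ker\pi(\Delta_n)\oplus C_n^-$ is exactly your Hodge decomposition, since $C_n^-=\ker\pi(d_n)\cap\left(\ker\pi(d_{n-1}^*)\right)^\perp=\overline{\operatorname{im}\pi(d_{n-1})}$ and $C_n^+=\overline{\operatorname{im}\pi(d_n^*)}$, and both arguments reduce the gap for $\pi(\Delta_n)$ to invertibility of the restrictions of $\pi(\Delta_n^\pm)$ on these two summands. The only difference is that you supply a direct proof (via the closed-range criterion for $T^*T$) of the equivalence between these invertibility conditions and reducedness of $H^n$ and $H^{n+1}$, the step the paper outsources to the citation \cite{bader-nowak}.
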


\begin{proof}[Sketch of proof]
Using the notation of \cite{bader-nowak}, given a cochain complex 
$$\dots \to C^{n-1}\to C^n\to C^{n+1}\to \dots,$$
where the $C^n$ have Hilbert space structures and the codifferentials $d_n:C^n\to C^{n+1}$ are bounded operators, 
we have $\Delta_n=d_n^*d_n+d_{n-1}d_{n-1}^*$.
We have an orthogonal decomposition
$$C^n=C_n^+\oplus \ker \Delta_n \oplus C_n^-,$$
where $C_n^-=\ker d_n \cap \ker (d_{n-1}^*)^\perp$ and $C_n^+=\ker d_{n-1}^*\cap( \ker d_n^*)^\perp$.
Then the restritictions of $d_n^*d_n$ and of $d_{n-1}d_{n-1}^*$ are invertible on $C_n^+$ and $C_n^-$, respectively if and only if 
the cohomology groups $H^n$ and $H^{n+1}$ are reduced \cite{bader-nowak}. Clearly, the first condition is equivalent to the fact that $0$ is isolated in 
the spectrum of the Laplacian $\Delta_n$.
\end{proof}

\subsubsection{Free groups}
Consider the free group $F_n$ on $n\ge 2$ generators. The standard Eilenberg-MacLane space $K(F_n,1)$ is the wedge $\bigvee_n S^1$ of $n$ circles,
whose universal cover is the tree, the Cayley graph of $F_n$. Then for $F_n$ the projection $p_1\in \mathbb{M}_k(C^*_r(F_n))$ exists and gives rise to a non-zero class in 
$K$-theory $C^*_r(F_n)$.

Indeed, since for $n\ge 2$, the free group $F_n$ is non-amenable we have that the $\ell_2$-cohomology
is reduced in degree 1 (i.e., the range of the codifferential into the 1-cochains is closed). 
Since $K(F_n,1)$ is 1-dimensional, the cohomology in degree 2 is reduced as there are no 2-cochains. 
We thus have $d_1=0$, $p_1^+=\operatorname{I}$ and $p_1=p_1^-$.
These facts together with lemma 
\ref{lemma : laplacian spectral gap if cohomologies reduced} imply that the cohomological Laplacian $\Delta_1$ in degree 1 has a spectral gap.

The $\ell_2$-Betti number of a free group $F_n$ on $n$ generators is
$$\beta_{(2)}^1(F_n)=n-1.$$ 
By the previous discussion, 
$$[p_1]\neq 0 \ \ \ \text{ in }\ \ \  K_0(C^*_r(F_n)).$$

Moreover, we can identify precisely the class of $p_1$ in the $K$-theory group.
Indeed, $K_0(C^*_r(F_n))$ is isomorphic to $\mathbb{Z}$ and generated by $1\in C^*_r(F_n)$. Because of this the value of the trace
determines the class of $p_1$ to be 
$$[p_1]=n-1\ \ \ \ \in \ \ \ \ \mathbb{Z}\simeq K_0(C^*_r(F_n)).$$
The corresponding Euler class  (see Remark \ref{remark: Euler class})  in $K_0(C^*_r(F_n))$ is given by
$$\Xi(F_n)=[p_0] - [p_1]= -[p_1].$$
More generally, the argument used in the above example together with  \cite[Corollary 4.8]{kahlerbook}  gives 
the following.
\begin{corollary} 
Let $G$ be a finitely presented group with infinitely many ends such that $H^2(G,\ell_2(G))$ is reduced. Then the projection 
$p_1(G)$ exists in $\mathbb{M}_{k_n}(C^*_r(G))$ and 
$$[p_1]\neq 0 \ \ \ \text{ in }\ \ \  K_0(C^*_r(G)).$$
\end{corollary}

\subsubsection{K\"{a}hler groups}
As shown in \cite{efremov}, \cite[Remark 2]{gromov-shubin}, see also \cite[Theorem 2.68]{luck-book} or \cite[Section 8]{luck-survey},
the property that the Laplacian has a spectral gap in the analytic setting (i.e. $L_2$-cohomology defined in terms differential forms) 
 is equivalent to the existence of the spectral gap in the combinatorial setting.
This is most often phrased in terms of Novikov-Shubin invariants associated to spectral density functions: spectral gap is equivalent to 
the Novikov-Shubin invariant being infinite, which is the same as the property that the spectral density function  is constant in the neighborhood 
of 0. Then it is shown that the combinatorial and analytic spectral density functions are dilation equivalent, and in particular, equal in a neighborhood of 0.

Consider now a closed K\"{a}hler hyperbolic manifold $M$  and denote $G=\pi_1(M)$.
As shown by Gromov \cite{gromov-Kahler}, see also \cite[Section 11.2.3]{luck-book}, the Laplacian acting on $L_2$-differential forms on the universal cover 
$\widetilde{M}$ has a spectral gap in every dimension. Gromov also showed that the $\ell_2$-cohomology of $M$ 
vanishes except for the middle dimension.

Consequently, if $G$ is a fundamental group of such a K\"{a}hler manifold then the assumptions of 
Proposition \ref{proposition: existence of higher Kazhdan projection in M_n(C* reduced)}
 are satisfied and 
the projection $p_n$ exists in $\mathbb{M}_{k_n}(C^*_r(G))$ in every dimension. 

Moreover, the Euler class 
$$\Xi(G)=\sum_{i\ge 0} (-1)^i[p_i] \ \ \ \ \in K_0(C^*_r(G))$$
exist and its trace is the $\ell_2$-Euler characteristic of $G$.

\subsubsection{Lattices in $\operatorname{PGL}_{n+1}(\mathbb{Q}_p)$}
As shown in \cite[Proposition 19]{bader-nowak}, given $n\ge 2$, a sufficiently large prime $p$ and a lattice $\Gamma \subseteq \operatorname{PGL}_{n+1}(\mathbb{Q}_p)$ we have that 
$H^n(\Gamma,\pi)$ is reduced for every unitary representation $\pi$. This fact is equivalent to $\pi(\Delta_n^-)$ having a spectral gap for every unitary
representation $\pi$ of $\Gamma$ and it is easy to check that such a spectral gap must be then uniform; i.e., $\Delta_n^+$ has a spectral
gap in $\mathbb{M}_k(C^*_{\max}(\Gamma))$.

At the same time there exists a finite index subgroup $\Gamma'\subseteq \Gamma$
such that $H^n(\Gamma',\mathbb{C})\simeq H^n(\Gamma,\ell_2(\Gamma/\Gamma'))\neq 0$. 
As a consequence we obtain the following.
\begin{proposition}
Let $\Gamma\subseteq \operatorname{PGL}_{n+1}(\mathbb{Q}_p)$ be a lattice. Then the partial Kazhdan projection $p_n^+$
exists in $\mathbb{M}_k(C^*_{\max}(\Gamma))$ and is non-zero.
\end{proposition}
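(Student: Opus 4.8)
The plan is to split the statement into two independent tasks—existence of $p_n^+$ in $\mathbb{M}_k(C^*_{\max}(\Gamma))$, and its non-triviality—handling the first by the spectral-gap criterion established above and the second by exhibiting a single representation on which the projection survives. For existence, recall that \cite[Proposition 19]{bader-nowak} (for $n\ge 2$ and $p$ large) gives that $H^n(\Gamma,\pi)$ is reduced for \emph{every} unitary representation $\pi$, and that this is exactly the condition identified in Lemma \ref{lemma : laplacian spectral gap if cohomologies reduced} as forcing a spectral gap at $0$ for the relevant partial Laplacian, uniformly in $\pi$; equivalently, $\Delta_n^+$ has a spectral gap as an element of $\mathbb{M}_k(C^*_{\max}(\Gamma))$. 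Granting this, existence is immediate: I would feed the uniform gap into the existence criterion for partial Kazhdan projections proved above, so that the limit of the heat semigroup $e^{-t\Delta_n^+}$ (equivalently, the spectral projection obtained by continuous functional calculus) lies in $\mathbb{M}_k(C^*_{\max}(\Gamma))$ and is, by construction, $p_n^+$.

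For non-triviality the plan is to produce one representation $\pi_0\in\mathcal{F}$ with $\pi_0(p_n^+)\neq 0$, since $p_n^+\neq 0$ is equivalent to such non-vanishing for at least one member of the family. I would take the finite-index subgroup $\Gamma'\subseteq\Gamma$ with $H^n(\Gamma',\mathbb{C})\neq 0$ and the quasi-regular representation $\pi_0$ on $\ell_2(\Gamma/\Gamma')$, so that Shapiro's lemma yields $H^n(\Gamma,\pi_0)\cong H^n(\Gamma',\mathbb{C})\neq 0$. Because $H^n(\Gamma,\pi_0)$ is reduced, the Hodge--de Rham isomorphism identifies it with the harmonic space $\ker\pi_0(\Delta_n)=\ker\pi_0(d_n)\cap\ker\pi_0(d_{n-1}^*)$, which is therefore non-zero. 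Since the harmonic space is contained in the cocycle space $\ker\pi_0(d_n)$, onto which $\pi_0(p_n^+)$ is by definition the orthogonal projection, I conclude $\pi_0(p_n^+)\neq 0$, and hence $p_n^+\neq 0$.

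I expect the genuine difficulty to be concentrated in the uniformity of the spectral gap underlying the existence half, which I would want to justify rather than merely cite. The subtlety is that reducedness of $H^n(\Gamma,\pi)$ for each fixed $\pi$ only provides a gap whose size may depend on $\pi$, whereas the spectrum of the Laplacian in $\mathbb{M}_k(C^*_{\max}(\Gamma))$ is the closure of $\bigcup_\pi\sigma(\pi(\Delta_n^+))$, so a lower bound on the non-zero part that is uniform over all $\pi$ is required. To supply it I would argue by contradiction: representations $\pi_j$ whose gaps tend to $0$ furnish unit cochains orthogonal to the harmonic spaces but with Rayleigh quotients tending to $0$, and assembling the $\pi_j$ into an ultraproduct (or infinite direct sum) representation produces an almost-harmonic but non-harmonic cochain, contradicting the reducedness guaranteed by \cite[Proposition 19]{bader-nowak} for that representation as well. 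By contrast, the non-vanishing step is soft once the arithmetic input is in hand, namely a finite-index subgroup with non-zero $n$-th rational cohomology, which is available because $\Gamma$ acts on the $n$-dimensional Bruhat--Tits building associated with $\operatorname{PGL}_{n+1}(\mathbb{Q}_p)$.
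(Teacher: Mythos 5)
Your overall route coincides with the paper's: existence is to come from the Bader--Nowak reducedness theorem plus uniformity of the spectral gap, and non-vanishing from a finite-index subgroup $\Gamma'$ with $H^n(\Gamma',\mathbb{C})\neq 0$ via Shapiro's lemma and the inclusion of the harmonic space into the range of the partial projection. Your direct-sum/ultraproduct justification of uniformity is correct and is a genuine improvement in detail over the paper, which dismisses that point as ``easy to check'': non-uniform gaps across representations $\pi_j$ would make $0$ a non-isolated spectral point of the Laplacian in $\bigoplus_j\pi_j$, hence give non-reduced cohomology for that single representation, contradicting the fact that the Bader--Nowak conclusion applies to \emph{every} unitary representation.

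There is, however, a degree-bookkeeping error in your existence step, which you inherit verbatim from the paper's own text. Reducedness of $H^n(\Gamma,\pi)$ is equivalent to closedness of $\operatorname{im}\pi(d_{n-1})$, i.e.\ to a spectral gap for $\pi(\Delta_n^-)=\pi(d_{n-1}d_{n-1}^*)$; a gap for $\Delta_n^+=d_n^*d_n$ is instead equivalent to reducedness of $H^{n+1}(\Gamma,\pi)$. (Sanity check in degree $0$: the gap for $\Delta_0=\Delta_0^+$ corresponds to reducedness of $H^1$, not $H^0$.) So the equivalence you assert (``$H^n$ reduced \dots\ equivalently, $\Delta_n^+$ has a spectral gap'') is off by one degree, and what the Bader--Nowak input plus your uniformity argument actually delivers is the existence of $p_n^-$, the projection onto $\ker\pi(d_{n-1}^*)$, not of $p_n^+$. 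The paper commits exactly the same slip (``equivalent to $\pi(\Delta_n^-)$ having a spectral gap \dots\ i.e., $\Delta_n^+$ has a spectral gap''), so its stated proposition and its proof sketch are mismatched in the same way. To get the statement literally as written you need reducedness of $H^{n+1}(\Gamma,\pi)$ uniformly in $\pi$, and this holds for a reason independent of Bader--Nowak: $\Gamma$ acts properly and cocompactly on the $n$-dimensional Bruhat--Tits building with finite cell stabilizers, so $H^{i}(\Gamma,\pi)=0$ for every $i>n$ and every unitary $\pi$; vanishing cohomology is trivially reduced, and uniformity follows from your own direct-sum argument. (For a torsion-free lattice one can take an $n$-dimensional $K(\Gamma,1)$, so that $d_n=0$ and $p_n^+=1$ outright; the substantive projection in this example is really $p_n^-$, equivalently $p_n$.) Your non-vanishing step is correct as written and applies to either projection, since the non-zero harmonic space produced by Shapiro's lemma lies in both $\ker\pi_0(d_n)$ and $\ker\pi_0(d_{n-1}^*)$. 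Thus your proof becomes complete either by reading the conclusion as the statement about $p_n^-$, or by inserting the one-line dimension argument above for the gap of $\Delta_n^+$.
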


\section{The coarse Baum-Connes conjecture}
We will now prove our main results and  describe the $K$-theory classes induced by the higher Kazhdan projections in Roe algebras and their 
applications to the coarse Baum-Connes conjecture of a box space 
of a residually finite group. The arguments we will use were introduced in \cite{higson,higson-lafforgue-skandalis} and developed further in 
\cite{willett-yu1}. We also refer to \cite[Chapter 13]{willett-yu-book} for more details.

\subsection{Roe algebras, box spaces and Laplacians}
\subsubsection{Roe algebras} 
The Roe algebra $C^*(X)$ of a discrete, bounded geometry metric space $X$ is the completion in $B(\ell_2(X;\mathcal{H}_0))$ 
of the $*$-algebra $\mathbb{C}[X]$ 
of finite propagation operators, which are locally compact; i.e. for a discrete space the
matrix coefficients are compact operators $T_{x,y}\in \mathcal{K}(\mathcal{H}_0)$ on a fixed, infinite-dimensional Hilbert space $\mathcal{H}_0$.

If $G$ acts on $X$ then the equivariant Roe algebra is defined to be the closure of the subalgebra 
$\mathbb{C}[X]^G\subseteq \mathbb{C}[X]$ of equivariant finite propagation operators, i.e. satisfying 
$T_{x,y}=T_{gx,gy}$ for any $g\in G$ and $x,y\in X$.

The uniform Roe algebra $C^*_u(X)$ of $X$ is the completion in $B(\ell_2(X))$ of the $*$-algebra $\mathbb{C}_u[X]$ of 
finite propagation operators. The equivariant uniform Roe algebra $\mathbb{C}_u^*(X)^G$ is defined as before by considering $G$-invariant operators in
$\mathbb{C}_u[X]$ and taking the closure inside $C^*_u(X)$.

See Definitions 3.2 and 3.6 in \cite{willett-yu1} for a detailed  description of the Roe algebra and the equivariant Roe algebra.

\subsubsection{Laplacians on box spaces}
Consider a finitely generated residually finite group $G$. 
Let $\{N_i\}$ be a family of finite index normal subgroups of $G$ satisfying $\bigcap N_i=\{e\}$. Consider the space 
$$Y=\coprod G/N_i,$$
viewed as a box space with $d(G/N_i,G/N_j)\ge 2^{i+j}$.

Let $\lambda_i$ be the quasi-regular representation of $G$ on $\ell_2\left(G/N_i\right)$ given by pulling back the regular representation
of $G/N_i$ on $\ell_2(G/N_i)$ to $G$ via the quotient map $G\to G/N_i$ and denote by $\mathcal{N}$ the family 
of representations
$$\mathcal{N}=\{\lambda, \lambda_1,\lambda_2,\dots\}.$$
The standing assumption in this section will be that the cohomological $n$-Laplacian $\Delta_n$ has a spectral gap in 
$\mathbb{M}_k(C^*_{\mathcal{N}}(G))$.
The cohomological Laplacian element  $\Delta_n\in \mathbb{M}_k(\mathbb{C}G)$ in degree $n$ maps to $\lambda_i(\Delta_n)$ in
$\mathbb{M}_k(\lambda_i(C^*_{\mathcal{N}}(G)))$ and we will denote 
$$\Delta_n^i= \lambda_i(\Delta_n).$$
Define  the Laplace element of the box space  to be 
$$D_n=\oplus_i\ \Delta_n^i\in \bigoplus \mathbb{M}_k ( \mathbb{C}_u[G/N_i])
\subseteq \mathbb{M}_n ( \mathbb{C}_u[Y]).$$
Note that $D_n$ has a spectral gap in $C^*(Y)$ if and only if all the $\Delta_n^i$  have a spectral gap in $C^*_{\mathcal{N}}(G)$.

\subsection{The lifting map}
From now on we will assume the group $G$ satisfies the operator norm localization property. 
As shown by Sako \cite{sako} the operator norm localization property is equivalent to exactness for bounded geometry 
metric spaces, in particular for finitely generated groups, so the assumptions of Theorem \ref{theorem: cbcc and l2betti numbers} guarantee this property
for $G$.

Note that we will only be using a special case of the setup described in \cite{willett-yu1}, as our 
box space consists of a family of finite quotients of a single group $G$, which naturally serves as a covering space for all of the finite quotients.
This setting is the same as the one in \cite{higson}, however we do follow the detailed version of the arguments as in \cite{willett-yu1}.

Willett and Yu  in \cite[Lemma 3.8]{willett-yu1} define a lifting map, a $*$-homomorphism
$$\phi: \mathbb{C}[Y]\to \dfrac{\prod_i \mathbb{C} [G]^{N_i}}{\bigoplus_i \mathbb{C} [G]^{N_i}},$$
into the algebra of sequences of elements of the $N_i$-invariant subspaces of $\mathbb{C}[G]$ modulo the relation of 
being equal at all but finitely many entries. 

We have 
$$\mathbb{M}_k\left(\dfrac{\prod_i \mathbb{C} [G]^{N_i}}{\bigoplus_i \mathbb{C} [G]^{N_i}}\right) =
\dfrac{\prod_i \mathbb{M}_k\left( \mathbb{C} [G]^{N_i}\right)}{\bigoplus_i  \mathbb{M}_k\left(\mathbb{C} [G]^{N_i}\right)}$$
and for every $k\in\mathbb{N}$ the map $\phi$ induces a $*$-homomorphism 
$$\phi^{(k)}: \mathbb{M}_k(\mathbb{C}[Y]) \to \mathbb{M}_k\left(\dfrac{\prod \mathbb{C} [G]^{N_i}}{\bigoplus \mathbb{C} [G]^{N_i}}\right),$$
by applying $\phi$ entrywise. 

We recall  that in \cite[Lemma 3.12]{willett-yu1} it was shown that if $G$ has the operator norm localization property then the map 
$\phi$ extends to a map 
$$\phi: C^*(Y)\to \dfrac{\prod_i C^*(G )^{N_i} }{\bigoplus_i C^*(G)^{N_i} },$$
and similarly induces the corresponding map $\phi^{(n)}$ on the $k\times k$ matrices over these algebras.
Here,  $\dfrac{\prod_i C^*(G )^{N_i} }{\bigoplus_i C^*(G )^{N_i} }$ is the algebra of sequences of elements of $C^*(G)^{N_i}$ modulo the relation of 
being asymptotically equal.

The same formula as the one for $\phi$ in \cite[Lemma 3.8]{willett-yu1} gives a uniform version of the lifting map, 
$$\phi_u: \mathbb{C}_u[Y]\to \dfrac{\prod_i \mathbb{C}_u [G]^{N_i}}{\bigoplus_i \mathbb{C}_u [G]^{N_i}},$$
which is in the same way a $*$-homomorphism.

If $G$ has the operator norm localization property (see e.g. \cite{chen-etal, sako}) then it also has the uniform version of that property, as proved by Sako \cite{sako}.
This implies that $\phi_u$ extends to a $*$-homomorphism
$$\phi_u: C^*_u(Y)\to \dfrac{\prod_i C^*_u (G)^{N_i}}{\bigoplus_i C^*_u (G)^{N_i}}.$$
See 13.3.11 and 13.3.12 in \cite{willett-yu-book} for the detailed arguments, which apply verbatim here.
Similarly, $\phi_u^{(n)}$ in both cases induces a map on matrices over the respective algebras.

\subsection{Surjectivity of the coarse Baum-Connes assembly map}
We will now focus on the lift of the Laplacian $D_n$.
In order to do this we will use an argument based on Lemma 5.6 in \cite{willett-yu1}. 
\begin{lemma}
Let $G$ be an exact group and assume that $\Delta_n$ has a spectral gap in $C^*_{\mathcal{N}}(G)$.
Denote by 
$$P_n=\bigoplus \lambda_i(p_n) \ \ \ \ \in \mathbb{M}_k( C_u^*(Y))$$ 
the spectral projection associated to $D_n\in \mathbb{M}_k(C_u^*(Y))$.
Then 
$$\phi_u^{(k)}(P_n)=\prod_{i=1}^{\infty}\lambda(p_n),$$ 
where $\lambda(p_n)$ is the projection onto the harmonic $n$-cochains in the $\ell_2$-cohomology of $G$.
\end{lemma}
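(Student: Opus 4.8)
The plan is to realize both $P_n$ and $\prod_i\lambda(p_n)$ as a single continuous function of a Laplacian, and then to exploit that $\phi_u^{(k)}$, being a unital $*$-homomorphism, intertwines continuous functional calculus. By the standing assumption $\Delta_n$ has a spectral gap in $C^*_{\mathcal{N}}(G)$, which is exactly a uniform gap $\epsilon>0$ across the family $\mathcal{N}$, since the spectrum of $\Delta_n$ in that completion is the closure of $\bigcup_{\pi\in\mathcal{N}}\sigma(\pi(\Delta_n))$. Fix a continuous function $\chi\colon\mathbb{R}\to\mathbb{R}$ with $\chi\equiv 1$ on a neighbourhood of $0$ and $\chi\equiv 0$ on $[\epsilon,\infty)$. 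Then $p_n=\chi(\Delta_n)$ in $\mathbb{M}_k(C^*_{\mathcal{N}}(G))$, and applying the $*$-representations $\lambda$ and $\lambda_i$ gives $\lambda(p_n)=\chi(\lambda(\Delta_n))$ and $\lambda_i(p_n)=\chi(\lambda_i(\Delta_n))$. Since $D_n=\bigoplus_i\lambda_i(\Delta_n)$ is block diagonal with uniformly bounded blocks, functional calculus distributes over the blocks, so $P_n=\bigoplus_i\lambda_i(p_n)=\chi(D_n)$; this also accounts for $P_n$ lying in $\mathbb{M}_k(C^*_u(Y))$.

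First I would establish the key lifting identity
$$\phi_u^{(k)}(D_n)=\prod_i\lambda(\Delta_n),$$
i.e. that $\phi_u^{(k)}$ sends the box Laplacian to the class of the constant sequence whose entries are the covering-space Laplacian $\lambda(\Delta_n)\in\mathbb{M}_k(C^*_u(G)^{N_i})$. Here the geometry enters: $D_n$ is a finite-propagation element whose $i$-th block $\lambda_i(\Delta_n)$ is induced from the single group-ring element $\Delta_n\in\mathbb{M}_k(\mathbb{C}G)$ through the quasi-regular representation. Because $\bigcap N_i=\{e\}$, the injectivity radius of the covering $G\to G/N_i$ grows with $i$, so once it exceeds the fixed propagation of $\Delta_n$ the lifting map of \cite[Lemma 3.8]{willett-yu1} returns exactly $\lambda(\Delta_n)$ in the $i$-th coordinate. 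Hence the lift of $D_n$ agrees with the constant sequence $(\lambda(\Delta_n))_i$ at all but finitely many coordinates and the two coincide modulo $\bigoplus_i\mathbb{M}_k(C^*_u(G)^{N_i})$; this is the matrix-valued, uniform form of the argument of \cite[Lemma 5.6]{willett-yu1}.

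Granting the lifting identity, the conclusion is formal. Using that $\phi_u^{(k)}$ is a unital $*$-homomorphism, and therefore commutes with the continuous function $\chi$ of a self-adjoint element, I would compute
$$\phi_u^{(k)}(P_n)=\phi_u^{(k)}\big(\chi(D_n)\big)=\chi\big(\phi_u^{(k)}(D_n)\big)=\chi\Big(\prod_i\lambda(\Delta_n)\Big)=\prod_i\chi\big(\lambda(\Delta_n)\big)=\prod_i\lambda(p_n).$$
The fourth equality uses that functional calculus of the class of a constant sequence is the class of the constant sequence of the functional calculus; this is legitimate since the uniform spectral gap places the spectrum of $\prod_i\lambda(\Delta_n)$ inside $\{0\}\cup[\epsilon,\infty)$, on which $\chi$ is continuous. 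Unitality of $\phi_u^{(k)}$ holds because the identity on $Y$ lifts blockwise to the identity on $G$, so $\phi_u$ sends the unit to the unit.

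The main obstacle is the lifting identity $\phi_u^{(k)}(D_n)=\prod_i\lambda(\Delta_n)$: one must check, from the explicit formula for $\phi_u$, that the block $\lambda_i(\Delta_n)$ lifts coordinatewise and for large $i$ to precisely $\lambda(\Delta_n)$, rather than to an operator merely close to it, and that the matrix amplification $\phi_u^{(k)}$ is compatible with this entrywise lifting. Once the uniform spectral gap and the unitality of $\phi_u^{(k)}$ are in place, the remaining ingredients — the choice of $\chi$, the distribution of functional calculus over the blocks of $D_n$, and the intertwining with $\phi_u^{(k)}$ — are routine.
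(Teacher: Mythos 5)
Your proposal is correct and takes essentially the same approach as the paper: both arguments first identify $\phi_u^{(k)}(D_n)$ with the class of the constant sequence of the lifted Laplacian (using the finite propagation of $\Delta_n$ and the eventual injectivity of the quotient maps $G\to G/N_i$), and then push the spectral projection through the unital $*$-homomorphism $\phi_u^{(k)}$ via functional calculus. The only cosmetic difference is that you use a fixed continuous cutoff function $\chi$ where the paper uses the heat semigroup limit $\lim_{t\to\infty}e^{-tD_n}$; your variant avoids interchanging a limit with $\phi_u^{(k)}$ and is, if anything, slightly tidier.
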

\begin{proof}
The projection $P_n$ is the spectral projection of $D_n$.
The Laplacian element $D_n$ defined above lifts to an element 
$$\phi_u^{(k)}(D_n)\in \mathbb{M}_k\left(\dfrac{\prod_{i=1}^{\infty} \mathbb{C}_u [G]^{N_i}}{\bigoplus_{i=1}^{\infty} \mathbb{C}_u [G]^{N_i}}\right),$$
whose spectrum is contained in $\{ 0\} \cup [\epsilon, \infty)$ for some $\varepsilon>0$. Indeed, since the same 
is true for $D_n \in C^*_u(Y)$, by assumption, and spectral gaps are preserved by homomorphisms of unital $C^*$-algebras.

It is easy to see that from the definition of $\phi$ in the proof of \cite[Lemma 3.8]{willett-yu1} that the lift of $D_n$
is represented by the constant sequence 
$$\phi_u^{(k)}(D_n)=\prod_{i=1}^{\infty}\Delta_n,$$
in the algebra
$$\dfrac{\prod_{i=1}^{\infty} \mathbb{M}_k(\mathbb{C}G )}{\bigoplus_{i=1}^{\infty} \mathbb{M}_k(\mathbb{C}G)} \subseteq \mathbb{M}_k\left(\dfrac{\prod_{i=1}^{\infty} \mathbb{C}_u [G]^{N_i}}{\bigoplus_{i=1}^{\infty} \mathbb{C}_u [G]^{N_i}}\right). $$
Indeed, this follows from the fact that the size of the support of $\Delta_n^i$ and the formula for the lift $\phi_{(u)}$ are both independent 
of the particular quotient $G/N_i$ for $i$ sufficiently large.

Consider now the spectral projection $\widetilde{P}_n$ associated to $\phi_u^{(k)}(D_n)$.  Then
$$\widetilde{P}_n=\lim_{t\to \infty} e^{-t\phi_u^{(k)}(D_n)}=\lim_{t\to \infty} \phi_u^{(k)}\left(e^{-tD_n}\right)=\phi_u^{(k)}(P_n).$$
Thus the associated spectral projection is of the form  $\prod_{i=1}^{\infty} \lambda(p_n)$, where 
 $$\lambda(p_n)\in \mathbb{M}_k(C^*_r(G))$$ 
 is the 
 projection onto the harmonic $n$-cochains in the $\ell_2$-cochains of $G$, as claimed.
\end{proof}

We now define the \emph{higher Kazhdan projection of the box space $Y$} to be $P_n\otimes q\in \mathbb{M}_n( C^*(Y))$, where $q$ is any rank one projection 
on $\mathcal{H}_0$. We consider the  associated $K$-theory class $[P_n\otimes q]\in K_0(C^*(Y))$. We will show that under certain conditions
this class is non-zero and does not lie in the image of the coarse Baum-Connes conjecture for $Y$.

The first thing to notice is that the lift of the projection $P_n\otimes q$ can be described explicitly as an element of $C^*(G)^G$.

\begin{lemma}
Let $q\in \mathcal{K}(\mathcal{H}_0)$. Then 
$$\phi(P_n\otimes q) = \phi_u(P_n)\otimes q.$$
\end{lemma}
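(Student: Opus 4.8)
The plan is to prove the identity $\phi(P_n \otimes q) = \phi_u(P_n) \otimes q$ by tracking how the lifting map $\phi$ on the full Roe algebra $C^*(Y)$ decomposes relative to the uniform lifting map $\phi_u$ on the uniform Roe algebra $C^*_u(Y)$. The central structural fact to exploit is that the element $P_n \otimes q$ is a simple tensor: $P_n \in \mathbb{M}_k(C^*_u(Y))$ is built from operators acting on the $\ell_2(G/N_i)$ factors (the "$X$-direction"), while $q \in \mathcal{K}(\mathcal{H}_0)$ is a rank-one operator acting purely on the auxiliary Hilbert space $\mathcal{H}_0$. The matrix coefficients of $P_n \otimes q$ as an element of the full Roe algebra are therefore $(P_n)_{x,y} \cdot q \in \mathcal{K}(\mathcal{H}_0)$, which factor through the uniform coefficients.

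First I would recall the explicit formula for $\phi$ from \cite[Lemma 3.8]{willett-yu1} in terms of matrix coefficients: the lift is defined coordinate-wise on finite-propagation operators by lifting the matrix entries along the covering maps $G \to G/N_i$. The key observation is that this formula is \emph{natural with respect to the tensor decomposition} $\ell_2(Y;\mathcal{H}_0) = \ell_2(Y) \otimes \mathcal{H}_0$: because $\phi$ acts only on the spatial indices $x,y \in Y$ and leaves the $\mathcal{H}_0$-component untouched, lifting an operator of the form $T \otimes q$ produces the lift of $T$ (computed via $\phi_u$, since $T$ has scalar, i.e. uniform, coefficients once $q$ is factored out) tensored with the unchanged operator $q$. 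Thus at the level of $\mathbb{C}_u[Y] \otimes \mathcal{K}(\mathcal{H}_0)$, one checks the identity directly from the defining formulas of $\phi$ and $\phi_u$, which agree on spatial indices.

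The main step is then to verify that this identity on the dense algebraic level extends to $P_n \otimes q$, which is a norm-limit rather than a finite-propagation element. Here I would invoke continuity: both $\phi$ and $\phi_u$ are $*$-homomorphisms of $C^*$-algebras (hence contractive and continuous), as established in the extension statements \cite[Lemma 3.12]{willett-yu1} and its uniform analogue cited in the excerpt, so the identity passes to limits. Since $P_n = \bigoplus_i \lambda_i(p_n)$ is a genuine element of $\mathbb{M}_k(C^*_u(Y))$ and $q$ is compact, $P_n \otimes q$ lies in $\mathbb{M}_k(C^*(Y))$, and both sides of the claimed equation are defined and continuous in $P_n$; approximating $P_n$ by finite-propagation elements and applying the algebraic identity termwise yields the result in the limit.

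The step I expect to be the main obstacle is making the tensor-factorization compatibility of $\phi$ and $\phi_u$ fully rigorous at the level of the defining formula. The lifting map is defined by a somewhat intricate bookkeeping of propagation and matrix coefficients along the covers $G \to G/N_i$, and one must confirm that the $\mathcal{H}_0$-variable genuinely plays no role in that construction, so that $\phi$ restricted to the subalgebra $\mathbb{C}_u[Y] \otimes q$ factors precisely as $\phi_u \otimes \operatorname{id}$. This is essentially a routine but careful unwinding of the definition in \cite{willett-yu1}, confirming that the formula for $\phi$ on a coefficient $T_{x,y} = S_{x,y} \cdot q$ returns $\phi_u(S)_{x,y} \cdot q$; once this compatibility of the two formulas is pinned down, the rest follows from continuity and density as above.
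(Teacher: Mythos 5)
Your proposal is correct and follows essentially the same route as the paper's proof: verify the identity $\phi(\alpha\otimes q)=\phi_u(\alpha)\otimes q$ for finite-propagation operators $\alpha$ directly from the defining formula of the lift in \cite[Lemma 3.8]{willett-yu1} (which only acts on the spatial indices), then approximate $P_n$ by finite-propagation operators and pass to the limit using continuity of the $*$-homomorphisms $\phi$ and $\phi_u$. The additional care you propose in checking that the $\mathcal{H}_0$-variable plays no role in the lifting formula is exactly what the paper's phrase ``by the definition of $\phi$'' compresses.
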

\begin{proof}
Let $\alpha_i$ be a sequence of finite propagation operators such that 
$P_n=\lim_i \alpha_i$.
Then 
$$\phi(\alpha_i\otimes q) = \phi_u(\alpha_i)\otimes q,$$
by the definition  of $\phi$ (\cite[Lemma 3.8]{willett-yu1}).
Passing to the limit and using continuity of $\phi$ we obtain the claim.
\end{proof}

Following \cite{higson, willett-yu1} define the map $d_*: K_0(C^*(Y))\to \dfrac{\prod \mathbb{Z}}{\bigoplus \mathbb{Z}}$ by
taking the map 
$$d: C^*(Y)\to \dfrac{\prod \mathcal{K}(\ell_2(G/N_i;\mathcal{H}_0))}{\bigoplus \mathcal{K}(\ell_2(G/N_i;\mathcal{H}_0))},$$
defined by 
$$A\mapsto \prod_{i=1}^{\infty} Q_i A Q_i,$$
where $Q_i:\ell_2(Y)\to \ell_2(G/N_i)$ is the projection on the $i$-th box $G/N_i$,
and considering the induced map on $K$-theory
$$d_*:K_0(C^*(Y))\to K_0\left(\dfrac{\prod \mathcal{K}(\ell_2(G/N_i;\mathcal{H}_0))}{\bigoplus \mathcal{K}(\ell_2(G/N_i;\mathcal{H}_0))}\right) \simeq 
\dfrac{\prod \mathbb{Z} }{\bigoplus \mathbb{Z}}.$$
The above map $d_*$ can recognize when our $K$-theory class is non-zero. The map $d_*$ applied to 
the projection $P_n$ gives the sequence of dimensions of images of $\lambda_i(p_n)$.

On the other hand, consider the trace $\tau_i$ on $C^*(G)^{N_i}\simeq C^*_r(N_i)\otimes \mathcal{K}$
by considering the tensor product of the standard trace on $C^*_r(N_i)$ with the canonical trace on $\mathcal{K}$.
These $\tau_i$ induce the map 
$$T:K_0\left( \dfrac{\prod_i C^*(G)^{N_i} }{\bigoplus_i C^*(G)^{N_i} } \right)\to \dfrac{\prod \mathbb{R}}{\bigoplus \mathbb{R}}.$$
The following diagram was analyzed in \cite{willett-yu1}.
\begin{center}
\begin{tikzcd}
  & K_0\left( \dfrac{\prod_i C^*(G)^{N_i} }{\bigoplus_i C^*(G)^{N_i} } \right)\arrow[rd, "T"]&  \\
 KX_0(Y) \arrow[ru, "\widetilde{\mu}_c"] \arrow[rd, "\mu_c"] &&\dfrac{\prod \mathbb{R}}{\bigoplus \mathbb{R}}\\
  &K_0(C^*(Y)) \arrow[uu, "\phi_*" ] \arrow[ru, "d_*"]
\end{tikzcd}
\end{center}

The next lemma was formulated in \cite{willett-yu1} for projections in $C^*(X)$, however the proof applies equally to projections in 
$\mathbb{M}_n(C^*(X))$.
\begin{lemma}[Lemma 6.5 in \cite{willett-yu1}]
If $p$ is a projection in $\mathbb{M}_n(C^*(Y))$ such that the class $[p]\in K_0(C^*(Y))$ is in the image of the coarse assembly map 
$\mu_0:KX_0(Y)\to K_0(C^*(Y))$, then
$$d_*([p])=  T(\phi_*([p]))\ \ \ \ \ \ \text{ in } \dfrac{\prod \mathbb{R}}{\bigoplus \mathbb{R}}.$$
\end{lemma}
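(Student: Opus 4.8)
The plan is to read off the identity from the commutative diagram displayed just above, restricted to the image of the coarse assembly map. Write $[p]=\mu_c(x)$ for some $x\in KX_0(Y)$ (recall $\mu_0=\mu_c$ in degree $0$). The asserted equality $d_*([p])=T(\phi_*([p]))$, understood in $\prod\mathbb{R}/\bigoplus\mathbb{R}$ via the natural inclusion $\prod\mathbb{Z}/\bigoplus\mathbb{Z}\hookrightarrow\prod\mathbb{R}/\bigoplus\mathbb{R}$ of the target of $d_*$, follows immediately once we know two facts: that the left-hand triangle commutes, i.e. $\phi_*\circ\mu_c=\widetilde{\mu}_c$, and that the outer region commutes, i.e. $d_*\circ\mu_c=T\circ\widetilde{\mu}_c$. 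Granting both, we obtain $d_*([p])=d_*(\mu_c(x))=T(\widetilde{\mu}_c(x))=T(\phi_*(\mu_c(x)))=T(\phi_*([p]))$, which is the claim. So the whole task reduces to establishing the two commutativities, and the bulk of the work is in the second.

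The commutativity of the left triangle is a naturality statement. The lifting map $\phi$ is defined on controlled (localization) algebras compatibly with the evaluation maps that define the coarse assembly map, so that $\widetilde{\mu}_c$ may be taken to be the composite $\phi_*\circ\mu_c$; this is precisely the content of the relevant lemmas of \cite{willett-yu1}, and the only place where the operator norm localization property (equivalently, exactness of $G$) is used is to guarantee that $\phi$ extends from $\mathbb{C}[Y]$ to $C^*(Y)$, which we have already invoked. Since every map in the diagram is applied entrywise, this step passes verbatim from $C^*(Y)$ to $\mathbb{M}_n(C^*(Y))$; amplifying all algebras and maps by $\mathbb{M}_n$ changes nothing in the argument, and this is the only modification of \cite[Lemma 6.5]{willett-yu1} that is needed.

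The substantive part, and where I expect the main obstacle to lie, is the commutativity of the outer region $d_*\circ\mu_c=T\circ\widetilde{\mu}_c$. The mechanism is that a class in the image of $\mu_c$ is represented by finite-propagation data coming from the localization picture, and on such a representative $\alpha$ the two prescriptions can be compared box by box. For all but finitely many $i$ the propagation of $\alpha$ is smaller than the injectivity radius of the finite box $G/N_i$, so that the lift $\phi(\alpha)_i$ is the naive $N_i$-periodic unfolding of $Q_i\alpha Q_i$ to $\ell_2(G;\mathcal{H}_0)$; identifying a fundamental domain for the $N_i$-action on $G$ with the points of the box then matches the von Neumann trace $\tau_i(\phi(\alpha)_i)$ with the ordinary trace $\operatorname{Tr}(Q_i\alpha Q_i)$ detected by $d_*$, with no index factor. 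Conceptually, both $d_*\circ\mu_c$ and $T\circ\widetilde{\mu}_c$ compute, slot by slot, the index of the operator underlying $x$: once as an ordinary Fredholm-type index on the finite box $G/N_i$ and once as an $L_2$-index over the infinite cover $G$ with deck group $N_i$, and these agree by the relevant case of Atiyah's $L_2$-index theorem. The genuine difficulty, exactly as in \cite{willett-yu1}, is that an arbitrary representative of $[p]$ need not have finite propagation, so one must use the assembly-map structure to produce a controlled representative and then control the discrepancy between $\phi$ and the naive unfolding uniformly enough that it lies in $\bigoplus\mathbb{R}$ and hence vanishes in the quotient $\prod\mathbb{R}/\bigoplus\mathbb{R}$; this uniform control is precisely what is delicate, and it is what makes the hypothesis $[p]\in\operatorname{im}(\mu_c)$ indispensable.
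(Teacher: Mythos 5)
Your proposal is correct and takes essentially the same route as the paper: the paper proves this lemma simply by citing Lemma 6.5 of \cite{willett-yu1} and observing that the argument there applies verbatim to projections in $\mathbb{M}_n(C^*(Y))$, which is exactly the matrix-amplification reduction you make. Your reconstruction of the cited argument --- the diagram chase through the two commutativities $\phi_*\circ\mu_c=\widetilde{\mu}_c$ and $d_*\circ\mu_c=T\circ\widetilde{\mu}_c$, with the box-by-box trace comparison on finite-propagation representatives as the substantive Atiyah-type step --- is a faithful account of what the reference does, so nothing further is needed.
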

These maps take values in  $\prod \mathbb{R}/\oplus \mathbb{R}$ viewed as an object in the category of abelian groups; that is, the equality of the two traces is an equality  
of coordinates for all but finitely many $i$ (see Section 13.3 in \cite{willett-yu-book}).
The next statement allows to show that when it is non-compact, the projection $P_n$ gives rise to a non-zero $K$-theory class in 
$K_0(C^*(Y))$.

\begin{proposition}\label{proposition: lower trace}
Assume that $H^n(G, \ell_2(G/N_i))\neq 0$ for infinitely many $i\in\mathbb{N}$. 
Then $d_*[P_n]\neq 0$.
\end{proposition}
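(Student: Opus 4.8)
The plan is to evaluate $d_*$ on $[P_n]$ directly by exploiting the block-diagonal structure of $P_n$ over the boxes, and then to identify the resulting integer sequence with the dimensions of the cohomology groups appearing in the hypothesis. Since the map $d$ is defined on $C^*(Y)$, the projection to feed in is the box-space Kazhdan projection $P_n\otimes q\in\mathbb{M}_k(C^*(Y))$; tensoring by the rank-one $q$ multiplies every rank below by $1$, so it plays no role in the computation and I will suppress it, writing simply $d_*[P_n]$ as in the statement.

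First I would record that, because $P_n=\bigoplus_i\lambda_i(p_n)$ respects the orthogonal decomposition $\ell_2(Y)=\bigoplus_i\ell_2(G/N_i)$, the compression by the $i$-th box projection satisfies $Q_iP_nQ_i=\lambda_i(p_n)$, namely the spectral projection of $\Delta_n^i=\lambda_i(\Delta_n)$ at $0$. Hence $d(P_n)=\prod_i\lambda_i(p_n)$ inside $\prod_i\mathcal{K}(\ell_2(G/N_i;\mathcal{H}_0))\big/\bigoplus_i\mathcal{K}(\ell_2(G/N_i;\mathcal{H}_0))$, where each factor is a finite-rank projection. Under the identification $K_0\!\left(\prod\mathcal{K}\big/\bigoplus\mathcal{K}\right)\simeq \prod\mathbb{Z}\big/\bigoplus\mathbb{Z}$ recalled above, the class $d_*[P_n]$ is therefore represented by the integer sequence $\big(\dim\operatorname{im}\lambda_i(p_n)\big)_i$, which is exactly the observation already noted in the text that $d_*$ reads off the sequence of dimensions of the images of the $\lambda_i(p_n)$.

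Next I would compute each of these dimensions. By construction $\lambda_i(p_n)$ is the orthogonal projection onto $\ker\lambda_i(\Delta_n)$, the space of harmonic $n$-cochains for $\lambda_i$, and the Hodge--de Rham isomorphism gives $\ker\lambda_i(\Delta_n)\simeq\overline{H}^n(G,\lambda_i)$. Since $G/N_i$ is finite, the representation $\lambda_i$ on $\ell_2(G/N_i)$ is finite-dimensional, so all cochain spaces are finite-dimensional and the cohomology is automatically reduced; thus $\overline{H}^n(G,\lambda_i)=H^n(G,\lambda_i)=H^n(G,\ell_2(G/N_i))$. Consequently $\dim\operatorname{im}\lambda_i(p_n)=\dim_{\mathbb{C}}H^n(G,\ell_2(G/N_i))$ for every $i$, and in particular this rank is a genuine (finite) integer, which is what is needed for $d_*[P_n]$ to be well defined in $\prod\mathbb{Z}/\bigoplus\mathbb{Z}$.

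Finally, the hypothesis that $H^n(G,\ell_2(G/N_i))\neq 0$ for infinitely many $i$ says precisely that the sequence $\big(\dim_{\mathbb{C}}H^n(G,\ell_2(G/N_i))\big)_i$ has infinitely many entries that are $\geq 1$. Such a sequence does not lie in $\bigoplus\mathbb{Z}$, so its class in the quotient $\prod\mathbb{Z}/\bigoplus\mathbb{Z}$ is nonzero, giving $d_*[P_n]\neq 0$. I do not expect a genuine obstacle: the argument is essentially a bookkeeping of ranks over the boxes. The only points requiring care are verifying that $d$ truly records the rank on each box (so that finite-dimensionality of $\lambda_i$ is what makes each coordinate integer-valued) and confirming that the passage through $P_n\otimes q$ leaves the rank sequence unchanged.
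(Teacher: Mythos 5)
Your proof is correct and follows essentially the same route as the paper: both identify $d_*[P_n]$ with the sequence $\bigl(\dim\ker\lambda_i(\Delta_n)\bigr)_i=\bigl(\dim H^n(G,\ell_2(G/N_i))\bigr)_i$ in $\prod\mathbb{Z}/\bigoplus\mathbb{Z}$ and conclude from the hypothesis that this sequence has infinitely many nonzero entries. The only difference is cosmetic: where the paper handles the matrix ambiguity via $\mathbb{M}_k(C^*(Y))=C^*(\mathbb{Z}_k\times Y)$ and outsources the final non-vanishing to the non-compactness argument in Willett--Yu, you argue directly from block-diagonality and the identification $K_0\bigl(\prod\mathcal{K}/\bigoplus\mathcal{K}\bigr)\simeq\prod\mathbb{Z}/\bigoplus\mathbb{Z}$, also making explicit the point (glossed over in the paper) that finite-dimensionality of $\lambda_i$ forces reduced and ordinary cohomology to coincide.
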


\begin{proof}We have
\begin{align*}
d_*(P_n)&= \prod_{i=1}^{\infty}  \dim \lambda_i(p_n)= \prod_{i=1}^{\infty} \dim H^n(G; \lambda_i).
\end{align*}
Indeed, since 
$$\mathbb{M}_n(C^*(Y)) = C^*(\mathbb{Z}_n\times Y),$$
the map
$$d^{(n)}:\mathbb{M}_n(C^*(X)) \to  
\dfrac{\prod \mathbb{M}_n(\mathcal{K}(\ell_2(G/N_i;\mathcal{H}_0)))}{\bigoplus\mathbb{M}_n( \mathcal{K}(\ell_2(G/N_i;\mathcal{H}_0)))}.$$
can be rewritten as 
$$d:C^*(X\times \mathbb{Z}_n) \to \dfrac{\prod \mathcal{K}(\ell_2(G/N_i\times \mathbb{Z}_n;\mathcal{H}_0))}{\bigoplus \mathcal{K}(\ell_2(G/N_i\times \mathbb{Z}_n;\mathcal{H}_0))}.$$
By the assumption on cohomology of $G$ with coefficients in $\lambda_i$, the projection $p$ is non-compact in $C^*(Y\times \mathbb{Z}_n)$
and as shown in the proof of Theorem 6.1 on page 1407 in \cite{willett-yu1}, we have $d_*[p]\neq 0$.
\end{proof}

The next lemma shows that the trace of the lift is naturally related to $\ell_2$-Betti numbers.
\begin{lemma}\label{lemma : computation of the upper trace of P_n}
$T(\phi_*([P_n]))= \prod_{i=1}^{\infty}[G:N_i] \beta_{(2)}^n(G)=\prod_{i=1}^{\infty}\, \beta_{(2)}^n(N_i)$.
\end{lemma}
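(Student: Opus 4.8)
The plan is to evaluate $T(\phi_*([P_n]))$ coordinate by coordinate and to recognize each coordinate, for all but finitely many $i$, as $[G:N_i]\beta_{(2)}^n(G)$; the second equality then follows from the multiplicativity of $\ell_2$-Betti numbers under finite-index subgroups. Throughout, $[P_n]$ denotes the class of the box-space projection $P_n\otimes q\in\mathbb{M}_k(C^*(Y))$.

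First I would unwind $\phi_*([P_n])$. Combining the identification $\phi_u^{(k)}(P_n)=\prod_i\lambda(p_n)$ with the formula $\phi(P_n\otimes q)=\phi_u(P_n)\otimes q$ from the two preceding lemmas, the class $\phi_*([P_n])$ is represented by the sequence in $\prod_i\mathbb{M}_k(C^*(G)^{N_i})\big/\bigoplus_i\mathbb{M}_k(C^*(G)^{N_i})$ whose $i$-th entry is $\lambda(p_n)\otimes q$, where $\lambda(p_n)\in\mathbb{M}_k(C^*_r(G))$ is the $G$-invariant projection onto the harmonic $n$-cochains. Since $T$ is induced coordinatewise by the traces $\tau_i$, and since equality in $\prod\mathbb{R}\big/\bigoplus\mathbb{R}$ only requires agreement for cofinitely many $i$, it suffices to compute $\tau_i(\lambda(p_n)\otimes q)$.

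The heart of the argument is this trace computation. Under the isomorphism $C^*(G)^{N_i}\simeq C^*_r(N_i)\otimes\mathcal{K}$, the trace $\tau_i$ is the tensor product of the standard trace on $C^*_r(N_i)$ with the canonical trace $\operatorname{Tr}$ on $\mathcal{K}$, and for an $N_i$-invariant operator $A$ on $\ell_2(G;\mathcal{H}_0)$ it is computed as the sum $\sum_{x\in F}\langle A\delta_x,\delta_x\rangle$ over a fundamental domain $F$ for the $N_i$-action on $G$, which has $[G:N_i]$ points (with the evident analogue summing over the $\mathbb{M}_k$ and $\mathcal{H}_0$ indices). Because $\lambda(p_n)$ is $G$-invariant, the contribution $\sum_{j=1}^k\langle(\lambda(p_n))_{jj}\delta_x,\delta_x\rangle$ of each point $x\in F$ is independent of $x$ and equals $\tau_{k,G}(\lambda(p_n))$; summing over the $[G:N_i]$ points of $F$ and using $\operatorname{Tr}(q)=1$ yields $\tau_i(\lambda(p_n)\otimes q)=[G:N_i]\,\tau_{k,G}(\lambda(p_n))$. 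By Proposition~\ref{proposition: existence of higher Kazhdan projection in M_n(C* reduced)} we have $\tau_{k,G}(\lambda(p_n))=\beta_{(2)}^n(G)$, giving the first equality $T(\phi_*([P_n]))=\prod_i[G:N_i]\beta_{(2)}^n(G)$. The second equality is then the standard multiplicativity $\beta_{(2)}^n(N_i)=[G:N_i]\beta_{(2)}^n(G)$ for the finite-index subgroup $N_i\leq G$.

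The main obstacle is the trace computation of the third paragraph: one must fix the normalization of $\tau_i$ under the identification $C^*(G)^{N_i}\simeq C^*_r(N_i)\otimes\mathcal{K}$ and check carefully that passing from $G$-invariance to mere $N_i$-invariance multiplies the trace of $\lambda(p_n)$ by exactly the index $[G:N_i]$, while correctly carrying the $\mathbb{M}_k$ and $\mathcal{H}_0$ tensor factors. The remaining ingredients---the two lifting lemmas, the value $\tau_{k,G}(\lambda(p_n))=\beta_{(2)}^n(G)$, and multiplicativity of $\ell_2$-Betti numbers---are either established above or standard.
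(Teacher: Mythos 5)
Your proposal is correct and follows essentially the same route as the paper: both reduce the claim to computing $\tau_{N_i}$ of the $G$-invariant lift $\lambda(p_n)\otimes q$ via the identification $C^*(G)^{N_i}\simeq C^*_r(N_i)\otimes\mathcal{K}$ over a fundamental domain, use $G$-invariance to see that each of the $[G:N_i]$ diagonal contributions equals $\tau_{k,G}(\lambda(p_n))=\beta_{(2)}^n(G)$, and invoke multiplicativity of $\ell_2$-Betti numbers for the second equality. Your summing of diagonal coefficients over the fundamental domain is exactly the paper's observation that in the decomposition $A\mapsto\sum_{g\in N}u_g\otimes A^{(g)}$ only the $g=e$ term survives the trace.
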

\begin{proof}
For a finite index subgroup $N\subseteq G$ the trace $\tau_N$ on $C^*(G)^N$ is defined as the tensor product of the canonical trace $\operatorname{tr}_N$ on $C^*_r(N)$
with the the canonical (unbounded trace) $\operatorname{Tr}$ on the compact operators, via the isomorphism
$$\psi_N:{C^*(G)^N\ }_{\overrightarrow{\ \ \ \simeq\ \ \ \ }} \ C^*_r(N)\otimes \mathcal{K}.$$
The isomorphism $\psi_N$ is defined by considering a fundamental domain $D\subset G$ for $N\subseteq G$
and for $A\in C^*(G)^N$ identifying 
$$A\mapsto \sum_{g\in N} u_g \otimes A^{(g)},$$
where $A^{(g)}\in \mathcal{K}(\ell_2(D,\mathcal{H}_0))$ is defined by the formula
$$A^{(g)}_{x,y} = A_{x,gy},$$
for $x,y \in D$.

With this identification we observe that for an element $A \in C^*(G)^G$ of the form $A =\alpha\otimes q$, where $\alpha\in C^*_r(G)$
and $q$ is a rank 1 projection on $\mathcal{H}_0$, we have
\begin{align*}
\tau_N(A) &= \tau_N\left(\sum_{g\in N} u_g \otimes A^{(g)}\right) \\
&= \sum_{g\in N} \operatorname{tr}(u_g) \operatorname{Tr}(A^{(g)})\\
&=\operatorname{Tr}(A^{(e)})
\end{align*}
In our case $A^{(e)}$ is a $D\times D$ matrix defined by restricting $A$ to $D$. Therefore 
$$\operatorname{Tr}(A^{(e)})=\sum_{x\in D} \alpha_{x,x}^{(e)}\cdot \operatorname{rank}(q)=[G:N] \alpha_{e,e}^{(e)}.$$
The same relation passes to traces of matrices over the respective $C^*$-algebras.
In the case of the projection $P_n$ these formulas yield
$$\tau_N(P_n)=[G:N] \beta_{(2)}^n(G),$$
as claimed.
\end{proof}

Before we summarize this discussion we will observe one more fact that will allow to relate our results to L\"{u}ck's approximation theorem.
Recall that the Betti number of a group $G$ is the number $\beta^n(G)=\dim_{\mathbb{C}} H^n(G,\mathbb{C})$.
\begin{lemma}\label{lemma: regular betti numbers}
For a finite index subgroup $N\subseteq G$ we have
$$\dim_{\mathbb{C}} H^n(G,\ell_2(G/N))=\beta^n(N).$$
\end{lemma}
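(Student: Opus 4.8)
The plan is to identify the cohomology $H^n(G,\ell_2(G/N))$ with ordinary group cohomology of $N$ via Shapiro's lemma. The starting observation is that, since $N$ has finite index in $G$, the coset space $G/N$ is finite, so the quasi-regular representation $\ell_2(G/N)=\CC[G/N]$ is finite-dimensional and is isomorphic, as a $G$-representation, to the induced module $\mathrm{Ind}_N^G\CC=\CC G\otimes_{\CC N}\CC$. In particular the coefficient module is finite-dimensional, so every codifferential in the cochain complex has closed (finite-dimensional) image; hence reduced and unreduced cohomology coincide and there are no analytic subtleties.

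First I would fix a model $X$ of $K(G,1)$ with finite $(n+1)$-skeleton (which exists since $G$ is of type $F_{n+1}$) and pass to its universal cover $\widetilde X=EG$, a free contractible $G$-CW complex. Then $H^n(G,\ell_2(G/N))$ is computed by the cochain complex $\mathrm{Hom}_{\CC G}(C_*(\widetilde X),\CC[G/N])$. The key step is the chain-level Frobenius reciprocity isomorphism
$$\mathrm{Hom}_{\CC G}\!\left(M,\CC[G/N]\right)\cong \mathrm{Hom}_{\CC N}(M,\CC),$$
natural in the $\CC G$-module $M$ and valid because for finite index $\CC[G/N]\cong \mathrm{Coind}_N^G\CC$, which I would apply with $M=C_*(\widetilde X)$.

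Next I would observe that any free contractible $G$-CW complex is also a free contractible $N$-CW complex, so $\widetilde X$ is simultaneously a model of $EN$; consequently the right-hand complex $\mathrm{Hom}_{\CC N}(C_*(\widetilde X),\CC)$ computes precisely $H^n(N,\CC)$. Combining the two identifications gives $H^n(G,\ell_2(G/N))\cong H^n(N,\CC)$, and taking complex dimensions yields
$$\dim_\CC H^n(G,\ell_2(G/N))=\dim_\CC H^n(N,\CC)=\beta^n(N),$$
as claimed.

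The only point requiring care is to check that the Hilbert-space cochain model $C^n(G,\pi)=\set{f:X^{(n)}\to\HH}$ used throughout the paper computes the same cohomology as the algebraic complex $\mathrm{Hom}_{\CC G}(C_*(\widetilde X),\pi)$ for the finite-dimensional coefficient representation $\pi=\ell_2(G/N)$. This is routine: finite-dimensionality forces all images to be closed, so the Hilbert-space structure plays no role and the two complexes agree. The same finiteness also guarantees that $N$ is of type $F_{n+1}$, so that $\beta^n(N)$ is finite and well-defined.
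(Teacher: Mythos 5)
Your proof is correct and follows essentially the same route as the paper: the paper identifies $\ell_2(G/N)$ with $\operatorname{CoInd}_N^G\mathbb{C}=\operatorname{Ind}_N^G\mathbb{C}$ (citing Brown, Proposition 5.9) and then invokes Shapiro's lemma to get $H^n(G,\ell_2(G/N))\simeq H^n(N,\mathbb{C})$. The only difference is that you unpack Shapiro's lemma via the chain-level Frobenius reciprocity applied to $C_*(\widetilde X)$ and verify the comparison with the Hilbert-space cochain model, whereas the paper simply cites the lemma; these added details are sound but not a different argument.
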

\begin{proof}
Since $N$ is of finite index in $G$ we have
$$\operatorname{CoInd}_N^G\mathbb{C} =\operatorname{Ind}_N^G\mathbb{C} =\ell_2(G/N),$$
see e.g. \cite[Proposition 5.9]{brown-book}.
Applying Shapiro's lemma \cite[Proposition 6.2]{brown-book} we obtain
$$H^n(G,\ell_2(G/N)) \simeq H^n(G,\operatorname{CoInd}_N^G\mathbb{C})\simeq H^n(N,\mathbb{C}).$$
\end{proof}

We are now in the position to formulate the main theorem of this section.

\setcounter{aaa}{\value{theorem}}
\setcounter{theorem}{1}
\begin{theorem}
Let $G$ be an exact, residually finite group of type $F_{n+1}$ and let $\{ N_i\}$ and $\mathcal{N}$ be as above.
Assume that  $\Delta_n\in  \mathbb{M}_{k_n}(C^*_{\mathcal{N}}(G))$ has a spectral gap
and that the coarse Baum-Connes assembly map 
 $KX_0(Y)\to K_0(C^*(Y))$
for the box space 
$Y=\coprod G/N_i$ of $G$
is surjective. Then
$$ \beta_{(2)}^n(N_i)= \beta^n(N_i)$$ 
for all but finitely many $i$.
\end{theorem}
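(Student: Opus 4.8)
The plan is to derive the conclusion directly from the commutative diagram above, by evaluating the two trace-type invariants $d_*$ and $T\circ\phi_*$ on the single $K$-theory class determined by the higher Kazhdan projection of the box space $Y$, and then comparing the resulting sequences. Since $\Delta_n$ has a spectral gap in $\mathbb{M}_{k_n}(C^*_{\mathcal{N}}(G))$, the element $D_n=\bigoplus_i\Delta_n^i$ has a spectral gap in $\mathbb{M}_{k_n}(C^*(Y))$, so its spectral projection $P_n=\bigoplus_i\lambda_i(p_n)$ is a genuine projection; tensoring with a rank one $q$ on $\mathcal{H}_0$ produces an honest projection $P_n\otimes q\in\mathbb{M}_{k_n}(C^*(Y))$ and hence a class $[P_n\otimes q]\in K_0(C^*(Y))$.

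First I would invoke the hypothesis that the coarse assembly map $\mu_0\colon KX_0(Y)\to K_0(C^*(Y))$ is surjective, which guarantees that $[P_n\otimes q]$ lies in its image. This is exactly the hypothesis of Lemma 6.5 of \cite{willett-yu1} (the compatibility lemma stated above), which then yields the crucial equality
$$d_*([P_n\otimes q]) = T(\phi_*([P_n\otimes q]))$$
in the quotient group $\prod\mathbb{R}/\bigoplus\mathbb{R}$. Everything downstream is a matter of identifying the two sides of this equality with the two families of Betti numbers.

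Next I would compute each side separately using the invariant formulas already established. For the left-hand side, Proposition \ref{proposition: lower trace} identifies $d_*([P_n\otimes q])$ with the sequence $\prod_i\dim_{\mathbb{C}}H^n(G,\ell_2(G/N_i))$; since each $\lambda_i$ is finite dimensional the reduced and ordinary cohomology coincide, so no care about closures is needed, and Lemma \ref{lemma: regular betti numbers} (via Shapiro's lemma) rewrites this as $\prod_i\beta^n(N_i)$. For the right-hand side, Lemma \ref{lemma : computation of the upper trace of P_n} evaluates $T(\phi_*([P_n\otimes q]))$ as $\prod_i[G:N_i]\beta_{(2)}^n(G)=\prod_i\beta_{(2)}^n(N_i)$. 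Substituting both into the displayed equality gives
$$\prod_i\beta^n(N_i)=\prod_i\beta_{(2)}^n(N_i)\qquad\text{in }\ \prod\mathbb{R}/\bigoplus\mathbb{R},$$
and by definition of this quotient, equality of two sequences means their coordinates agree for all but finitely many $i$, which is precisely the asserted $\beta^n(N_i)=\beta_{(2)}^n(N_i)$ cofinitely.

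The genuine content does not lie in this final assembly but in the three preceding lemmas supplying the lift $\phi_u$, the $\ell_2$-trace computation for $P_n$, and the Willett--Yu compatibility; granting those, the theorem is a matter of bookkeeping. The one point I would watch carefully is conceptual rather than technical: $d_*$ counts the \emph{finite}-dimensional harmonic spaces over the quotients $G/N_i$, while $T\circ\phi_*$ records the $L(N_i)$-von Neumann dimension of the \emph{infinite}-dimensional $\ell_2$-harmonic space of $G$, so these are a priori unrelated numbers, and the surjectivity of the assembly map is exactly what forces them to coincide cofinitely. I would also verify that the passage from $P_n$ (living in the uniform Roe algebra) to $P_n\otimes q$ (in $C^*(Y)$) and the identification $\mathbb{M}_{k_n}(C^*(Y))=C^*(\mathbb{Z}_{k_n}\times Y)$ used in Proposition \ref{proposition: lower trace} are applied consistently across all three invariant computations, so that $d_*$, $\phi_*$ and $T$ are indeed being evaluated on one and the same class.
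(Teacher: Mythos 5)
Your proposal is correct and coincides with the paper's own proof: the authors likewise deduce the theorem by applying the Willett--Yu compatibility lemma (Lemma 6.5 of \cite{willett-yu1}) to the class $[P_n\otimes q]$ and then identifying the two sides via Proposition \ref{proposition: lower trace}, Lemma \ref{lemma : computation of the upper trace of P_n} and Lemma \ref{lemma: regular betti numbers}, reading off the cofinite equality from the definition of $\prod\mathbb{R}/\bigoplus\mathbb{R}$. Your write-up merely makes explicit the bookkeeping (and the harmless point that reduced and ordinary cohomology agree for the finite-dimensional coefficients $\ell_2(G/N_i)$) that the paper compresses into a one-line citation.
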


\setcounter{theorem}{\value{aaa}}

\begin{proof}
The claim follows from the explicit computation of the values of both traces in  
proposition \ref{proposition: lower trace} and 
 lemmas  \ref{lemma : computation of the upper trace of P_n} and \ref{lemma: regular betti numbers}.
\end{proof}

Note that the Theorem \ref{theorem: cbcc and l2betti numbers} 
provides a strenghtening of L\"{u}ck's approximation theorem \cite{luck-gafa} in the case described by the above theorem.
Indeed, we can rewrite the conclusion of Theorem \ref{theorem: cbcc and l2betti numbers} as vanishing of 
$$[G:N_i]\left(\beta_{(2)}^n(G) - \dfrac{\beta^n(N_i)}{[G:N_i]} \right) = 0$$
for all but finitely many $i$.
Compare this with \cite[Theorem 5.1]{luck-approx-survey}, where examples with slow speed of convergence have been constructed.
The speed of convergence of Betti numbers of finite quotients to the $\ell_2$-Betti number of a residually finite group
was also studied in \cite{clair-whyte}, however the techniques used there are different. In our case both the assumptions and the conclusions are stronger. 

\section{Final remarks}
\begin{question} 
Is there a spectral gap characterization of the existence of higher Kazhdan projections in $C^*_{\mathcal{N}}(G)$?
\end{question}

As mentioned earlier, Kazhdan's property $(T)$ for $G$ is characterized by either the vanishing of first cohomology of $G$ with every unitary coefficients,
or equivalently, by the fact that the first cohomology of $G$ with any unitary coefficients is always reduced.
Related higher-dimensional generalizations of property $(T)$ were discussed in \cite{bader-nowak-deformations}, see also \cite{dechiffre-etal}.
As pointed out in \cite{bader-nowak}, the generalizations of these two conditions to higher degrees 
are not equivalent. 
The existence of higher Kazhdan projections is related to the property that cohomology is reduced and to the existence of gaps in the spectrum
of the Laplacian rather than to vanishing of cohomology. Indeed, in our reasoning it is crucial that 
cohomology does not vanish.  
It would be interesting to determine if the existence of higher Kazhdan projections can be viewed as a higher-dimensional rigidity property. 

\begin{remark}[Higher Kazhdan projections and $K$-amenability]\normalfont
Clearly, if for a particular $G$ we have at the same time that $\beta_{(2)}^n(G)=0$ and 
$\pi(\Delta_n)$ has a nontrivial kernel and a spectral gap for some $\pi\neq\lambda$,
then $G$ cannot be amenable. 

Consider the map 
$$K_*(C^*_{\max}(G))\to K_*(C^*_r(G)).$$
If $[p_n]\neq 0$ in the former, but $[p_n^r]=0$ in the latter then  the map above cannot be an isomorphism.
In other words, if $\beta_{(2)}^n(G)=0$ and we could ensure condition that the class of $[p_n]$ is not 0, then the group 
$G$ would not be $K$-amenable, as this last condition forces the above map in $K$-theory to be an isomorphism.
\end{remark}

\begin{remark}[Ghost projections]\normalfont
We can extend the notion of a ghost operator to matrix algebras over the Roe algebra by defining an element $T\in \mathbb{M}_n(C^*(Y))$ to
be a ghost if $T_{i,j}\in C^*(Y)$ is a ghost for every $1\le i,j\le n$. It can be shown that the kernel of the lifting map $\phi^{(n)}$ consists precisely of 
ghost operators.  See \cite[Corollary 13.3.14]{willett-yu-book}. This observation provides a new cohomological tool to construct ghost projections 
in the case when an $\ell_2$-Betti number of $G$ vanishes. The problem of constructing new examples 
of ghost projection was posed by Willet and Yu \cite{willett-yu1}.
\end{remark}

\begin{bibdiv}
\begin{biblist}

 \bib{akemann-walter}{article}{
   author={Akemann, C. A.},
   author={Walter, M. E.},
   title={Unbounded negative definite functions},
   journal={Canadian J. Math.},
   volume={33},
   date={1981},
   number={4},
   pages={862--871},
}

\bib{kahlerbook}{book}{
   author={Amor\'{o}s, J.},
   author={Burger, M.},
   author={Corlette, K.},
   author={Kotschick, D.},
   author={Toledo, D.},
   title={Fundamental groups of compact K\"{a}hler manifolds},
   series={Mathematical Surveys and Monographs},
   volume={44},
   publisher={American Mathematical Society, Providence, RI},
   date={1996},
   pages={xii+140},
}

\bib{bader-nowak-deformations}{article}{
   author={Bader, U.},
      author={Nowak, P. W.},
      title={Cohomology of deformations},
   journal={Journal of Topology and Analysis},
   volume={07},
   issue={01},
   pages={81--104},
   year={2015},
}

\bib{bader-nowak}{article}{
   author={Bader, U.},
      author={Nowak, P. W.},
      title={Group algebra criteria for vanishing of cohomology},
   journal={arXiv:2001.10185 },
   date={2020},
}

\bib{bekka-etal}{book}{
   author={Bekka, B.},
   author={de la Harpe, P.},
   author={Valette, A.},
   title={Kazhdan's property (T)},
   series={New Mathematical Monographs},
   volume={11},
   publisher={Cambridge University Press, Cambridge},
   date={2008},
   pages={xiv+472},
}

\bib{brown-book}{book}{
   author={Brown, K. S.},
   title={Cohomology of groups},
   series={Graduate Texts in Mathematics},
   volume={87},
   note={Corrected reprint of the 1982 original},
   publisher={Springer-Verlag, New York},
   date={1994},
   pages={x+306},
}

\bib{clair-whyte}{article}{
   author={Clair, B.},
   author={Whyte, K.},
   title={Growth of Betti numbers},
   journal={Topology},
   volume={42},
   date={2003},
   number={5},
   pages={1125--1142},
}

\bib{chen-etal}{article}{
   author={Chen, X.},
   author={Tessera, R.},
   author={Wang, X.},
   author={Yu, G.},
   title={Metric sparsification and operator norm localization},
   journal={Adv. Math.},
   volume={218},
   date={2008},
   number={5},
   pages={1496--1511},
}

\bib{dechiffre-etal}{article}{
   author={De Chiffre, M.},
   author={Glebsky, L.},
   author={Lubotzky, A.},
   author={Thom, A.}
   title={Stability, cohomology vanishing, and non-approximable groups},
   journal={arXiv:1711.10238},
   date={2017},
}

\bib{drutu-nowak}{article}{
   author={Dru\c{t}u, C.},
   author={Nowak, P. W.},
   title={Kazhdan projections, random walks and ergodic theorems},
   journal={J. Reine Angew. Math.},
   volume={754},
   date={2019},
   pages={49--86},
   issn={0075-4102},
   review={\MR{4000570}},
   doi={10.1515/crelle-2017-0002},
}

\bib{efremov}{article}{
   author={Efremov, A. V.},
   title={Combinatorial and analytic Novikov-Shubin invariants},
   journal={unpublished},
   date={1991},
}

\bib{gomez-aparicio-etal}{book}{
   author={Gomez Aparicio, M. P.},
   author={Julg, P.},
   author={Valette, A.},
   title={The Baum-Connes conjecture: an extended survey},
   publisher={arXiv:1905.10081},
   year={2019},
}

\bib{gromov-Kahler}{article}{
   author={Gromov, M.},
   title={K\"{a}hler hyperbolicity and $L_2$-Hodge theory},
   journal={J. Differential Geom.},
   volume={33},
   date={1991},
   number={1},
   pages={263--292},
}

\bib{gromov-shubin}{article}{
   author={Gromov, M.},
   author={Shubin, M. A.},
   title={von Neumann spectra near zero},
   journal={Geom. Funct. Anal.},
   volume={1},
   date={1991},
   number={4},
   pages={375--404},
}

\bib{higson}{article}{
   author={Higson, N.},
   title={Counterexamples to the coarse Baum-Connes conjecture},
   journal={unpublished},
   date={1998},
}

\bib{higson-lafforgue-skandalis}{article}{
   author={Higson, N.},
   author={Lafforgue, V.},
   author={Skandalis, G.},
   title={Counterexamples to the Baum-Connes conjecture},
   journal={Geom. Funct. Anal.},
   volume={12},
   date={2002},
   number={2},
   pages={330--354},
 
}

\bib{higson-roe-book}{book}{
   author={Higson, N.},
   author={Roe, J.},
   title={Analytic $K$-homology},
   series={Oxford Mathematical Monographs},
   note={Oxford Science Publications},
   publisher={Oxford University Press, Oxford},
   date={2000},
   pages={xviii+405},
}

\bib{luck-gafa}{article}{
   author={L\"{u}ck, W.},
   title={Approximating $L^2$-invariants by their finite-dimensional
   analogues},
   journal={Geom. Funct. Anal.},
   volume={4},
   date={1994},
   number={4},
   pages={455--481},
}

 \bib{luck}{article}{
   author={L\"{u}ck, W.},
   title={The relation between the Baum-Connes conjecture and the trace
   conjecture},
   journal={Invent. Math.},
   volume={149},
   date={2002},
   number={1},
   pages={123--152},
}

\bib{luck-book}{book}{
   author={L\"{u}ck, W.},
   title={$L^2$-invariants: theory and applications to geometry and
   $K$-theory},
   series={Ergebnisse der Mathematik und ihrer Grenzgebiete. 3. Folge. A
   Series of Modern Surveys in Mathematics [Results in Mathematics and
   Related Areas. 3rd Series. A Series of Modern Surveys in Mathematics]},
   volume={44},
   publisher={Springer-Verlag, Berlin},
   date={2002},
   pages={xvi+595},
}

\bib{luck-survey}{article}{
   author={L\"{u}ck, W.},
   title={$L^2$-invariants of regular coverings of compact manifolds and
   CW-complexes},
   conference={
      title={Handbook of geometric topology},
   },
   book={
      publisher={North-Holland, Amsterdam},
   },
   date={2002},
   pages={735--817},
}

\bib{luck-approx-survey}{article}{
   author={L\"{u}ck, W.},
   title={Approximating $L^2$-invariants by their classical counterparts},
   journal={EMS Surv. Math. Sci.},
   volume={3},
   date={2016},
   number={2},
   pages={269--344},
}

\bib{osajda}{article}{
   author={Osajda, D.},
   title={Small cancellation labellings of some infinite graphs and applications},
   journal={Acta Math., to appear},
}

\bib{roy}{article}{
   author={Roy, R.},
   title={The trace conjecture---a counterexample},
   journal={$K$-Theory},
   volume={17},
   date={1999},
   number={3},
   pages={209--213},
}

 \bib{sako}{article}{
   author={Sako, H.},
   title={Property A and the operator norm localization property for
   discrete metric spaces},
   journal={J. Reine Angew. Math.},
   volume={690},
   date={2014},
   pages={207--216},
}

 \bib{schick}{article}{
   author={Schick, T.},
   title={The trace on the $K$-theory of group $C^*$-algebras},
   journal={Duke Math. J.},
   volume={107},
   date={2001},
   number={1},
   pages={1--14},
}

\bib{valette}{book}{
   author={Valette, A.},
   title={Introduction to the Baum-Connes conjecture},
   publisher={Birkh\"{a}user Verlag},
   year={2002},
}

 \bib{willett-yu1}{article}{
   author={Willett, R.},
   author={Yu, G.},
   title={Higher index theory for certain expanders and Gromov monster
   groups, I},
   journal={Adv. Math.},
   volume={229},
   date={2012},
   number={3},
   pages={1380--1416},
}

\bib{willett-yu-book}{book}{

   author={Willett, R.},
   author={Yu, G.},
   title={Higher index theory},
   publisher={Cambridge University Press},
   series={Cambridge Studies in Advanced Mathematics},
   date={2020},
}

\end{biblist}
\end{bibdiv}

 \Addresses
 
\end{document}